\documentclass[reqno]{amsart}
\usepackage{amsmath}
\usepackage{amssymb}
\usepackage{amscd}
\usepackage{color}
\usepackage[all]{xy}

\newcommand{\fm}{\mathfrak{m}}
\newcommand{\GD}{\operatorname{GD}}
\newcommand{\gGD}{\operatorname{gGD}}

\newcommand{\Max}{\operatorname{Max}}
\newcommand{\Spec}{\operatorname{Spec}}
\newtheorem{theorem}{Theorem}[section]
\newtheorem{lemma}[theorem]{Lemma}
\newtheorem{corollary}[theorem]{Corollary}
\newtheorem{proposition}[theorem]{Proposition}

\newtheorem{example}[theorem]{Example}
\newtheorem{remark}[theorem]{Remark}

\begin{document}

\title[On graded going-down domains, II]
{On graded going-down domains, II}
\author[P. Sahandi and N. Shirmohammadi]
{Parviz Sahandi and Nematollah Shirmohammadi}

\address{(Sahandi) Department of Pure Mathematics, Faculty of Mathematics, Statistics and Computer Science,  University of Tabriz, Tabriz, Iran.} \email{sahandi@ipm.ir}
\address{(Shirmohammadi) Department of Pure Mathematics, Faculty of Mathematics, Statistics and Computer Science,  University of Tabriz, Tabriz, Iran.} \email{shirmohammadi@tabrizu.ac.ir}


\thanks{2020 Mathematics Subject Classification: 13A02, 13A15, 13F05}
\thanks{Key Words and Phrases: Graded integral domain, graded going-down domain, $\gGD$-domain, gr-valuation domain, graded pullback}

\begin{abstract}
In this paper we consider the graded going-down property of graded integral domains in pullbacks. It then enables us to give original examples of these domains.
\end{abstract}

\maketitle

\section{Introduction}

Let $R\subseteq T$ be an extension of (commutative) integral domains and $P$ be a prime ideal of $R$. The extension $R\subseteq T$ is said to satisfy \emph{going-down to $P$} in case if $P\subseteq P_1$ are prime ideals of $R$ and $Q_1$ is a prime ideal of $T$ satisfying $Q_1\cap R=P_1$, then there exists a prime ideal $Q$ of $T$ such that $Q\subseteq Q_1$ and $Q\cap R=P$ \cite{d97, mc74}. The extension $R\subseteq T$ is said to satisfy \emph{going-down} (for short, $\GD$) if, $R\subseteq T$ satisfies going-down to $P$ for every prime ideal $P$ of $R$ \cite[Page 28]{k74}. The integral domain $R$ is said to be a \emph{going-down domain} (for short, a $\GD$ domain) if, for every overring $T$ of $R$, the extension $R\subseteq T$ satisfies $\GD$ \cite{d74, dp76}. It is known that every Pr\"{u}fer domain and every Noetherian integral domain of (Krull) dimension less than or equal to 1  are $\GD$ domains \cite[Corollary 4 and Proposition 7]{d73}. In \cite{d09} Dobbs examined the $\GD$ domain property in pullback of integral domains. Because of its ability to construct examples and counterexamples, pullback construction is very important in Multiplicative Ideal Theory.

Recently the graded version of $\GD$ domains was introduced and investigated in \cite{ss23}. Let $R\subseteq T$ be an extension of $\Gamma$-graded domains for a (nonzero) torsionless commutative cancellative monoid $\Gamma$ and $P$ be a homogeneous prime ideal of $R$. The extension $R\subseteq T$ is said to satisfy \emph{graded going-down to $P$} in case if $P\subseteq P_1$ are homogeneous prime ideals of $R$ and $Q_1$ is a homogeneous prime ideal of $T$ satisfying $Q_1\cap R=P_1$, then there exists a homogeneous prime ideal $Q$ of $T$ such that $Q\subseteq Q_1$ and $Q\cap R=P$. We say that $R\subseteq T$ satisfies \emph{graded going-down} (for short, $\gGD$) if, $R\subseteq T$ satisfies graded going-down to $P$ for every homogeneous prime ideal $P$ of $R$. The graded domain $R$ is said to be a \emph{graded going-down domain} (for short, a $\gGD$ domain) if, for every homogeneous overring $T$ of $R$, the extension $R\subseteq T$ satisfies $\gGD$. It is shown that one may restrict the test overrings $T$ to simple homogeneous overrings or to gr-valuation overrings of $R$ \cite[Theorem 3.12]{ss23}. It is also proved that $\gGD$ property is stable under the formation of rings of fractions and factor domains (see \cite[Proposition 4.1 and Corollary 4.8]{ss23}).

The present paper is a sequel to \cite{ss23}. The purpose here is to study the behaviour of the concept of $\gGD$ domains in pullbacks of graded domains and to give some new examples of $\gGD$ domains. To this, in Section 2, we recall the definition and properties of graded pullbacks and prove some preliminary results. In Section 3, we prove our main result of the paper:

\begin{theorem}
Let $T=\bigoplus_{\alpha \in \Gamma}T_{\alpha}$ be a $\Gamma$-graded integral domain, $M =\bigoplus_{\alpha \in \Gamma}M_{\alpha}$ be a maximal homogeneous ideal of $T$, $k = T/M$ (so $k = \bigoplus_{\alpha \in \Gamma}T_{\alpha}/M_{\alpha}$ is a $\Gamma$-graded integral domain), $D =\bigoplus_{\alpha \in \Gamma} D_{\alpha}$ be a graded subring of $k$, $\varphi:T\to k$ be the canonical homomorphism, and $R= \varphi^{-1}(D)$. Then $R$ is a $\gGD$ domain if and only if both $T$ and $D$ are $\gGD$ domains.
\end{theorem}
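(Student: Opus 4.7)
My plan is to mimic the classical pullback argument for $\GD$ domains, working throughout with homogeneous primes and invoking the graded tools from \cite{ss23} together with the preliminary material of Section~2.

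For the forward direction, assume that $R$ is a $\gGD$ domain. Since $M$ is a homogeneous ideal of $R$ and $R/M$ is canonically isomorphic (as a $\Gamma$-graded ring) to $D$, the stability of the $\gGD$ property under factor domains \cite[Corollary 4.8]{ss23} yields that $D$ is a $\gGD$ domain. For $T$, I would use the pullback fact (presumably recorded in Section~2) that $T$ is a homogeneous overring of $R$, together with the graded analogue of ``overrings of $\gGD$ domains are $\gGD$''. The latter follows from \cite[Theorem 3.12]{ss23}: any homogeneous overring of $T$ is in particular a homogeneous overring of $R$, so it may be tested for $\gGD$ through $R$ and the conclusion transferred to $T$.

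For the converse, assume that $T$ and $D$ are $\gGD$ domains. By \cite[Theorem 3.12]{ss23} it suffices to verify that $R\subseteq V$ satisfies $\gGD$ for every gr-valuation overring $V$ of $R$. Fix homogeneous primes $P\subseteq P_1$ of $R$ and a homogeneous prime $Q_1$ of $V$ with $Q_1\cap R=P_1$, and split into cases according to the position of $M$. If $M\not\subseteq P_1$, then the homogeneous localizations of $R$ and $T$ away from $P_1$ coincide, so $P$ and $P_1$ correspond to homogeneous primes of $T$, and the $\gGD$ property of $T$ (applied with $V$ viewed as a gr-valuation overring of $T$) produces the required $Q$. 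If $M\subseteq P$, then $M\subseteq Q_1$ as well, and passing modulo $M$ reduces the problem to the extension $D\subseteq V/(V\cap M)$, where the $\gGD$ property of $D$ finishes the job; the resulting intermediate prime then lifts back to $V$. The remaining mixed case ($M\subseteq P_1$ but $M\not\subseteq P$) is handled by combining the two reductions: first produce, via the $\gGD$ property of $T$, a homogeneous chain in $T$ from a prime lying over $P$ up to $M$ (which lies over $P_1$), then intersect with $V$ and verify that the resulting prime lies inside $Q_1$ and contracts to $P$.

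The main obstacle is the mixed case, where one must glue the two reductions (to $T$ and to $D$) along the gr-valuation overring $V$ without breaking homogeneity. The key ingredients I expect to need from Section~2 are the graded pullback description of the homogeneous prime spectrum of $R$ in terms of those of $T$ and $D$, and the dichotomy that any gr-valuation overring $V$ of $R$ either dominates $T$ after a suitable homogeneous localization (so the question reduces to $T$) or has $V/(V\cap M)$ a gr-valuation overring of $D$ (so the question reduces to $D$). Once these structural facts are in place, the three cases assemble routinely.
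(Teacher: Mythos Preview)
Your converse direction has a genuine gap. In the case $M\not\subseteq P_1$, the gr-valuation overring $V$ of $R$ need not contain $T$, so you cannot ``apply the $\gGD$ property of $T$ with $V$ viewed as a gr-valuation overring of $T$''. In the case $M\subseteq P$, the expression $V/(V\cap M)$ is not what you want: since $M\subseteq R\subseteq V$ we have $V\cap M=M$, and $M$ is not in general an ideal of $V$. What you would actually need is a homogeneous prime $Q$ of $V$ with $Q\subseteq Q_1$ and $Q\cap R=M$, but producing such a $Q$ is itself an instance of the graded going-down you are trying to establish. The hoped-for ``dichotomy'' (either $V$ dominates $T$ or $V/(\text{something})$ is a gr-valuation overring of $D$) is neither stated nor proved in Section~2, and it is not clear how to make it precise in the graded setting. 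The mixed case is left as a sketch that inherits both difficulties. (Your forward direction is morally correct, although deducing that $T$ is $\gGD$ from $R$ being $\gGD$ requires more than \cite[Theorem~3.12]{ss23}: that theorem is a test criterion, not the statement that homogeneous overrings of $\gGD$ domains are $\gGD$.)

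The paper proceeds quite differently. Using that $\gGD$ is local \cite[Proposition~4.1]{ss23} together with Proposition~\ref{pull}, it reduces to the case where $T$ has a unique maximal homogeneous ideal (Lemma~\ref{loc}). That case is handled in two layers. First, when $D=F$ (Lemma~\ref{lem1}), the key device is to form the compositum $S=TV$ for an arbitrary homogeneous overring $V$ of $R$, observe that $T\subseteq S$ satisfies $\gGD$ because $T$ is a $\gGD$ domain, and then pull the resulting prime back to $V$; this is exactly what circumvents the obstacle that $V$ need not contain $T$. Second, for general $D$, one splits the diagram through $S=\varphi^{-1}(F)$ and recognizes $R$ as the graded CPI-extension $R(M)$; Corollary~\ref{rpr} (built on the divided-prime results Lemma~\ref{ggdtop} and Proposition~\ref{lem2}) then gives the equivalence. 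None of this machinery appears in your outline, and the compositum step in particular is the missing idea.
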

In Section 4, we give some examples of $\gGD$ domains issued from pullbacks.


\subsection{Graded integral domains}

Let $\Gamma$ be a (nonzero) torsionless commutative cancellative monoid (written
additively) and $\langle \Gamma \rangle = \{a - b \mid a,b \in \Gamma\}$ be the
quotient group of $\Gamma$; so $\langle \Gamma \rangle$ is a torsionfree abelian group.
It is well known that a cancellative monoid $\Gamma$ is torsionless
if and only if it can be given a total order compatible with the monoid
operation \cite[page 123]{no68}. By a $(\Gamma$-)graded integral domain  $R =\bigoplus_{\alpha \in \Gamma}R_{\alpha}$,
we mean an integral domain graded by $\Gamma$.
That is, each nonzero $x \in R_{\alpha}$ has degree $\alpha$, i.e., deg$(x) = \alpha$,  and
deg$(0) = 0$. Thus, each nonzero $f \in R$ can be written uniquely as $f = x_{\alpha_1} + \dots + x_{\alpha_n}$ with
deg$(x_{\alpha_i}) = \alpha_i$ and $\alpha_1 < \cdots < \alpha_n$.
A nonzero $x \in R_{\alpha}$ for every $\alpha \in \Gamma$ is said
to be {\em homogeneous}.

Let  $H = \bigcup_{\alpha \in \Gamma}(R_{\alpha} \setminus \{0\})$; so
$H$ is the saturated multiplicative set of nonzero homogeneous elements of $R$.
Then $R_H$, called the {\em homogeneous quotient field} of $R$,
 is a $\langle \Gamma \rangle$-graded integral domain whose nonzero homogeneous elements are units.  The set of nonzero homogeneous elements of a graded domain $A=\bigoplus_{\alpha\in\Gamma}A_{\alpha}$ is denoted by $H(A)$. Hence $H=H(R)$.
We say that an overring $T$ of $R$ is a {\em homogeneous overring} of $R$ if
$T = \bigoplus_{\alpha \in \langle \Gamma \rangle}(T \cap (R_H)_{\alpha})$;
so $T$ is a $\langle \Gamma \rangle$-graded integral domain
such that $R \subseteq T \subseteq R_H$. We say that $R= \bigoplus_{\alpha\in\Gamma}R_{\alpha}$ is a {\em graded-valuation domain} (gr-valuation domain) if either $aR \subseteq bR$ or $bR \subseteq aR$ for all $a,b \in H$. The ring of fractions $R_S$ is a homogeneous overring of $R$ for
a multiplicative set $S$ of nonzero homogeneous elements of $R$
(with deg$(\frac{a}{b}) =$ deg$(a) -$ deg$(b)$ for $a \in H$ and $b \in S$).

The $\Gamma$-graded domain $R=\bigoplus_{\alpha\in\Gamma}R_{\alpha}$ is called a \emph{graded subring} of a $\Lambda$-graded domain $T =\bigoplus_{\alpha \in \Lambda}T_{\alpha}$, if $\Gamma$ is a subsemigroup of $\Lambda$ and for every $\alpha\in \Gamma$, $R_{\alpha}\subseteq T_{\alpha}$. It is easy to see that we can consider $R$ as a $\Lambda$-graded ring by setting $R_{\alpha}=0$ for each $\alpha\in\Lambda\setminus\Gamma$, and in this case $R=\bigoplus_{\alpha \in \Lambda}(T_{\alpha}\cap R)$. By ``$R\subseteq T$ is an extension of graded domains'' we mean that $R$ is a graded subring of $T$.

For a fractional ideal $A$ of $R$ with $A \subseteq R_H$,
let $A^*$ be the fractional ideal of $R$ generated by homogeneous elements in $A$; so
$A^* \subseteq A$. The fractional ideal $A$ is said to be {\em homogeneous} if $A^* = A$. A homogeneous ideal $P$ of $R$ is prime if and only if for all $a, b\in H$, $ab\in P$ implies $a\in P$ or $b\in P$ \cite[Page 124]{no68}. The set of all homogeneous prime ideals of $R$ is denoted by $\Spec_h(R)$. We note that a minimal prime ideal $P$ of a homogeneous ideal $I$ of $R$ is homogeneous. Indeed, the inclusions $I\subseteq P^*\subseteq P$ shows that $P^*=P$.

A homogeneous ideal of $R$ is called a {\em maximal homogeneous ideal}
if it is maximal among proper homogeneous ideals of $R$.
It is easy to see that each proper homogeneous ideal of $R$
is contained in a maximal homogeneous ideal of $R$. The set of all maximal homogeneous ideals of $R$ is denoted by $\Max_h(R)$.


Throughout this paper, let $\Gamma$ be a nonzero torsionless commutative cancellative monoid, $R=\bigoplus_{\alpha\in\Gamma}R_{\alpha}$ be an integral domain graded by $\Gamma$ and $H$ be the set of nonzero homogeneous elements of $R$.

\section{Preliminaries of pullback of graded domains}

Let $T=\bigoplus_{\alpha \in \Gamma}T_{\alpha}$ be a $\Gamma$-graded integral domain, $M =\bigoplus_{\alpha \in \Gamma}M_{\alpha}$ be a maximal homogeneous ideal of $T$, $k = T/M$ (so $k = \bigoplus_{\alpha \in \Gamma}T_{\alpha}/M_{\alpha}$ is a $\Gamma$-graded integral domain
in which each nonzero homogeneous element is invertible), $D =\bigoplus_{\alpha \in \Gamma} D_{\alpha}$ be a graded subring of $k$, $F$ be the homogeneous quotient field of $D$, $\varphi:T\to k$ be the canonical homomorphism, and $R= \varphi^{-1}(D)$ be the pullback of the following diagram:
\begin{displaymath}
\xymatrix{ R = \varphi^{-1}(D) \ar[r] \ar[d] &
D \ar[d] \\
T \ar[r]^{\varphi} & k = T/M. }
\end{displaymath}
It is shown that if $R_{\alpha} = \{x \in T_{\alpha} \mid \varphi(x) \in D_{\alpha}\}$ for each $\alpha \in \Gamma$, then $R =  \bigoplus_{\alpha \in \Gamma} R_{\alpha}$ is a graded domain \cite[Lemma 1]{cs18}. We refer to this diagram as a pullback of type $\bigtriangleup$. The following proposition collects the main properties of the pullbacks of type $\bigtriangleup$.

\begin{proposition}(see \cite[Proposition 2.1 and Remark 2.2]{cs18a})\label{pull}
In a pullback of type $\triangle$ we have:
\begin{enumerate}
\item $M$ is the conductor of $T$ to $R$; hence $M$ is divisorial in $R$.
\item $R$ and $T$ have the same (homogeneous) quotient fields; so $R_H = T_H$.
\item For each homogeneous prime ideal $P$ of $R$ such that
$M \nsubseteq P$, there exists a unique homogeneous prime ideal $Q$ of
$T$ such that $Q\cap R=P$. Moreover $R_{H \setminus P} = T_{H \setminus Q}$ and $R_P=T_Q$.
\item Let $Q$ be a maximal homogeneous ideal of $T$ with $Q \neq M$.
Then $R_{H \setminus Q \cap R} = T_{H \setminus Q}$ and $R_{Q \cap R} = T_Q$.
\item $M$ is a unique maximal homogeneous ideal of $T$ if and only if each homogeneous ideal of $R$ is
comparable to $M$.
\item The homogeneous ideals of $R$ containing $M$ are precisely the ideals $\varphi^{-1}(J)$, where $J$ is
a homogeneous ideal of $D$. Moreover, $R/\varphi^{-1}(J) = D/J$.
\item $M$ is the unique prime of $T$ contracting to $M$ in $R$, and $(R : M) = (M : M)$.
\item Suppose that $P$ is a homogeneous prime ideal of $R$ with
$P\supseteq M$. Then $T_{H \setminus P}=T_{H(T)\setminus M}$. In
particular, $T_{H \setminus M}=T_{H(T)\setminus M}$.
\item Assume that $k=F$ and set $N = H \setminus M$. Then
$D_{\varphi(N)}=k$, whence $R_{H \setminus M}=T_{H \setminus M}$ and
by (8) it is equal to $T_{H(T)\setminus M}$.
\end{enumerate}
\end{proposition}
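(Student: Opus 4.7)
The plan is to derive all nine items from three guiding observations: $M = \ker\varphi$ is simultaneously a $T$-ideal and contained in $R$; $k = T/M$ is a ``graded field'' in which every nonzero homogeneous element is invertible; and $R$ and $T$ share the same homogeneous quotient field. I would begin with (1): the inclusion $M \subseteq (R:T)$ is immediate because $MT = M \subseteq R$. For the reverse, observe that $(R:T)$ is a $T$-ideal contained in $R$, so its image $\varphi((R:T))$ is an ideal of $k$ contained in $D$; since $D \subsetneq k$ and the homogeneous components of $k$ are generated by units, a graded-field argument forces this image to be zero, giving $(R:T) \subseteq M$. Divisoriality is automatic from $(R:T) = \bigcap_{t \in T} t^{-1}R$. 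Item (2) follows because any nonzero homogeneous $m \in M$ makes $mt \in M \subseteq R$ homogeneous for every homogeneous $t \in T$, whence $t = mt/m \in R_H$.

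The localization statements (3)--(6) form the core. For (3), the hypothesis $M \not\subseteq P$ supplies $m \in (M \cap H) \setminus P$; it is a unit of $R_{H \setminus P}$, and $mT \subseteq R$ forces $T \subseteq m^{-1}R \subseteq R_{H \setminus P}$, hence $T_{H \setminus P} = R_{H \setminus P}$, from which the unique homogeneous $Q$ of $T$ lying over $P$ is read off by the standard prime correspondence. Item (4) is (3) applied to $P = Q \cap R$, since a maximal homogeneous $Q \neq M$ cannot contain $M$. Item (6) is the standard bijection $J \leftrightarrow \varphi^{-1}(J)$ between homogeneous ideals of $D$ and homogeneous ideals of $R$ containing $\ker\varphi = M$, together with the automatic isomorphism $R/\varphi^{-1}(J) \cong D/J$. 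For (5), if $M$ is the unique maximal homogeneous ideal of $T$, then every proper homogeneous ideal of $T$ sits inside $M$, so each homogeneous $I$ of $R$ has either $IT \subseteq M$ (yielding $I \subseteq M$) or $IT = T$ (yielding $I \supseteq M$ via (6)); conversely, any maximal homogeneous $Q \neq M$ would produce via (3) a homogeneous ideal of $R$ incomparable to $M$.

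For (7), $M$ is prime in $T$ because $T/M = k$ is a domain, and uniqueness follows because any homogeneous prime $Q$ of $T$ contracting to $M$ in $R$ satisfies $Q \supseteq M$, forcing $Q/M$ to be a homogeneous prime of the graded field $k$, necessarily zero; the identity $(R:M) = (M:M)$ uses (1), since $xM \subseteq R$ makes $xM$ a $T$-ideal inside $R$, hence inside $M$. I expect the main obstacle to be (8): the inclusion $T_{H \setminus P} \subseteq T_{H(T) \setminus M}$ is automatic from $H \setminus P \subseteq H(T) \setminus M$ (valid because $P \supseteq M$), but the reverse requires showing each homogeneous $t \in T \setminus M$ becomes a unit after localizing at $H \setminus P$. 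The plan there is to exploit invertibility of $\varphi(t)$ in $k$ to lift a relation $ts \equiv s' \pmod{M}$ with $s, s' \in H \setminus P$, which forces $t$ to become a unit in the localization. Finally, (9) assumes $k = F$, so each homogeneous element of $k$ is a ratio of elements of $\varphi(H \setminus M)$, giving $D_{\varphi(N)} = k$; the equalities $R_{H \setminus M} = T_{H \setminus M} = T_{H(T) \setminus M}$ then follow by lifting this identity through $\varphi$ and invoking (8) with $P = M$.
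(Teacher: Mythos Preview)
The paper does not prove this proposition: it is quoted from \cite[Proposition 2.1 and Remark 2.2]{cs18a} with no argument given, so there is nothing in the paper to compare against. Your proposal stands as an independent verification, and the three guiding observations you isolate---that $M$ is a common ideal of $R$ and $T$, that $k$ is a graded field, and that $R_H=T_H$---are exactly the engine behind the original proofs in \cite{cs18a}. Your treatment of items (1), (2), (3), (4), (6), (8), and (9) is correct as sketched.

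Two small points deserve tightening. In (5), the appeals to (6) and (3) are not quite the right hooks. For the forward direction, once $IT=T$ you can argue directly: for homogeneous $m\in M$ one has $m\in (IT)m=I(Tm)\subseteq IM\subseteq IR=I$, so $M\subseteq I$. For the converse, the incomparable ideal is $Q\cap R$ itself: from $Q+M=T$ the degree-zero part of a relation $1=q+m$ gives $q_0\in (Q\cap R)\setminus M$, while $M\not\subseteq Q$ forces $M\not\subseteq Q\cap R$. In (7), your argument (passing to $Q/M$ as a homogeneous prime of the graded field $k$) proves uniqueness only among \emph{homogeneous} primes of $T$; read literally for arbitrary primes the assertion can fail, since $k$ need not be a field and may carry nonzero (non-homogeneous) primes meeting $D$ trivially. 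The homogeneous reading is the intended one and is what the rest of the paper uses.
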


The h-height of a homogeneous prime ideal $P$ of a graded ring $A$ (denoted by h-ht$_A(P)$) is defined to be the supremum of the lengths of chains of homogeneous prime ideals descending from $P$ and the h-dimension of $A$ (denoted by h-$\dim(A)$) is defined to be $\sup\{$h-$\mathrm{ht}_A(P)\mid P$ is a homogeneous prime ideal of $A\}$. In the following results we want to find the h-$\dim(R)$ in a pullback of type $\triangle$.

\begin{remark}\label{lll}
Consider a pullback diagram of type $\bigtriangleup$ and assume that $T$ has a unique maximal homogeneous ideal. Applying a similar argument as in \cite[Proposition 2.1(5)]{f80}, one can obtain the equality
$$\mathrm{h}\text{-}\dim(R)=\mathrm{h}\text{-}\dim(T)+\mathrm{h}\text{-}\dim(D).$$
\end{remark}

For the nongraded case of the following lemma see assertion (c) in proof of \cite[Theorem 1.4]{f80}.

\begin{lemma}\label{h} Consider a pullback diagram of type $\bigtriangleup$. Then $\text{h-}\mathrm{ht}_R(M)=\text{h-}\mathrm{ht}_T(M)$.
\end{lemma}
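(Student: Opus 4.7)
The plan is to localize both $R$ and $T$ at the complement of $M$, reducing the identity to Remark~\ref{lll} applied to a pullback whose residue graded domain is a graded field. Set $R' := R_{H(R)\setminus M}$ and $T' := T_{H(T)\setminus M}$. Homogeneous primes of $R'$ (respectively $T'$) correspond bijectively to the homogeneous primes of $R$ (respectively $T$) contained in $M$, so $MR'$ (respectively $MT'$) is the unique maximal homogeneous ideal of $R'$ (respectively $T'$), and one obtains the height identifications $\text{h-}\mathrm{ht}_R(M) = \mathrm{h}\text{-}\dim(R')$ and $\text{h-}\mathrm{ht}_T(M) = \mathrm{h}\text{-}\dim(T')$.

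Next, I would verify that $R' \subseteq T'$ is again a pullback of type $\triangle$, with the canonical map being the extension $\varphi:T' \to T'/MT' = k$ and with residue graded domain the homogeneous quotient field $F$ of $D$. Here $T'/MT'$ is $k$ localized at the image of $H(T)\setminus M$, which equals $k$ because every nonzero homogeneous element of $k$ is already a unit; analogously, $R'/MR' = D_{H(D)} = F$. The equality $R' = \varphi^{-1}(F)$ requires a lifting argument: given $t \in T'$ with $\varphi(t) = d/d' \in F$ for some $d \in D$ and $d' \in H(D)$, one uses the degreewise surjection $R_\alpha \twoheadrightarrow D_\alpha$ built into the pullback construction to lift $d'$ to some $r' \in H(R)\setminus M$, and then $\varphi(tr') = d \in D$ gives $tr' \in R$, whence $t = (tr')/r' \in R'$.

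Since $T'$ has a unique maximal homogeneous ideal, Remark~\ref{lll} applies to the localized pullback and yields $\mathrm{h}\text{-}\dim(R') = \mathrm{h}\text{-}\dim(T') + \mathrm{h}\text{-}\dim(F)$. Because every nonzero homogeneous element of $F$ is a unit, $\mathrm{h}\text{-}\dim(F) = 0$, which together with the height identifications gives $\text{h-}\mathrm{ht}_R(M) = \text{h-}\mathrm{ht}_T(M)$, as required. The main technical point is the inclusion $\varphi^{-1}(F) \subseteq R'$ above: this is where the pullback hypothesis is actually used, and it rests on the surjectivity $R_\alpha \twoheadrightarrow D_\alpha$ supplied by the diagram of type $\triangle$.
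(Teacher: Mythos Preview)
Your strategy---localize at $M$, verify that the resulting square is again a pullback of type $\triangle$ whose $D$-component is the graded field $F$, and then invoke Remark~\ref{lll} together with $\mathrm{h}\text{-}\dim(F)=0$---is exactly the idea behind the paper's proof. The paper carries it out in two stages (first the special case $D=F$, then the general case by factoring through the intermediate ring $S=\varphi^{-1}(F)$ and applying Proposition~\ref{pull}(9) to the upper square), whereas you collapse everything to a single localization; the endpoint is the same.

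There is, however, one step in your argument that does not go through as written. In the verification of $\varphi^{-1}(F)\subseteq R'$ you assert that ``$\varphi(tr')=d\in D$ gives $tr'\in R$''. But the pullback identity $R=\varphi^{-1}(D)$ applies only to elements of $T$, and $tr'$ is a priori only an element of $T'=T_{H(T)\setminus M}$; multiplying $t\in T'$ by $r'\in R$ need not land in $T$. To close this gap you need Proposition~\ref{pull}(8), which gives $T'=T_{H(R)\setminus M}$: write $t=a/s$ with $a\in T$ and $s\in H(R)\setminus M$, observe that $ar'\in T$ with $\varphi(ar')=\varphi(t)\varphi(r')\varphi(s)=d\,\varphi(s)\in D$ (since $\varphi(s)\in D$), conclude $ar'\in R$ from the original pullback relation, and hence $t=(ar')/(sr')\in R'$. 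Alternatively, you may simply cite \cite[Remark~2.2(1)]{cs18a} for the fact that localizing a type-$\triangle$ pullback at a homogeneous multiplicative set of $R$ yields another type-$\triangle$ pullback, which is what the paper does; with that in hand your proof becomes a one-line application of Remark~\ref{lll}.
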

\begin{proof} First we prove the lemma in case $D=F$. Assume that $D=F$ and consider the following pullback of type $\triangle$ (see \cite[Remark 2.2(1)]{cs18a})
\begin{displaymath}
\xymatrix{ R_{H\setminus M} \ar[r] \ar[d] &
F \ar[d] \\
T_{H\setminus M} \ar[r] & k. }
\end{displaymath}
Hence by Remark \ref{lll} and Proposition \ref{pull}(9), we have $\text{h-}\mathrm{ht}_R(M)=\mathrm{h}\text{-}\dim(R_{H\setminus M})=\mathrm{h}\text{-}\dim(T_{H\setminus M})=\mathrm{h}\text{-}\dim(T_{H(T)\setminus M})=\text{h-}\mathrm{ht}_T(M)$.

In the general case, split the pullback diagram into two parts as follows
\begin{displaymath}
\xymatrix{ R \ar[r] \ar[d] &
D \ar[d] \\
S:=\varphi^{-1}(F) \ar[r] \ar[d] &
F \ar[d] \\
T \ar[r]^{\varphi} & k. }
\end{displaymath}
Then from the upper pullback and Proposition \ref{pull}(9), we have $\text{h-}\mathrm{ht}_R(M)=\text{h-}\mathrm{ht}_S(M)$, and from the lower one and the first paragraph we have $\text{h-}\mathrm{ht}_S(M)=\text{h-}\mathrm{ht}_T(M)$, to complete the proof.
\end{proof}

\begin{lemma}\label{ll}
Consider a pullback diagram of type $\bigtriangleup$ and assume that $k=F$. Then
$$\mathrm{h}\text{-}\dim(R)=\max\{\mathrm{h}\text{-}\mathrm{ht}_T(M)+\mathrm{h}\text{-}\dim(D),\mathrm{h}\text{-}\dim(T)\}.$$
\end{lemma}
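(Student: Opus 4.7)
The plan is to prove both inequalities separately.

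For the lower bound $\mathrm{h}\text{-}\dim(R)\geq\max\{\mathrm{h}\text{-}\mathrm{ht}_T(M)+\mathrm{h}\text{-}\dim(D),\mathrm{h}\text{-}\dim(T)\}$, I would exhibit two explicit chains in $\Spec_h(R)$. First, I start with a chain $Q_0\subsetneq\cdots\subsetneq Q_r=M$ of length $r=\mathrm{h}\text{-}\mathrm{ht}_T(M)$ in $\Spec_h(T)$; by Proposition \ref{pull}(3) and (7) its contraction $Q_0\cap R\subsetneq\cdots\subsetneq M$ is a chain of the same length in $\Spec_h(R)$. On top of it I append the $\varphi^{-1}$-image of a chain $(0)\subsetneq\bar{P}_1\subsetneq\cdots\subsetneq\bar{P}_d$ of length $d=\mathrm{h}\text{-}\dim(D)$ in $\Spec_h(D)$, which by Proposition \ref{pull}(6) lifts to an order-preserving chain of primes of $R$ strictly containing $M$; the concatenation has length $r+d$. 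Separately, contracting any chain of length $\mathrm{h}\text{-}\dim(T)$ in $\Spec_h(T)$ yields a chain of the same length in $\Spec_h(R)$, since distinct homogeneous primes of $T$ not containing $M$ contract to distinct primes of $R$ by Proposition \ref{pull}(3), while $M$ contracts to $M$ uniquely by Proposition \ref{pull}(7).

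The forward inequality is where the hypothesis $k=F$ enters, and I expect the main work to be there. The key intermediate claim is: \emph{if $P\subsetneq P'$ are homogeneous primes of $R$ with $M\subseteq P'$ and $M\not\subseteq P$, then $P\subseteq M$.} To verify it, let $Q$ be the unique homogeneous prime of $T$ with $Q\cap R=P$ provided by Proposition \ref{pull}(3). Every homogeneous element of $Q$ that lies in $R$ belongs to $Q\cap R=P\subseteq P'$, so $Q$ is disjoint from $H\setminus P'$. By Proposition \ref{pull}(8), $T_{H\setminus P'}=T_{H(T)\setminus M}$, so the two multiplicative subsets of $T$ have the same saturation; since $Q$ is disjoint from $H\setminus P'$ it must also be disjoint from $H(T)\setminus M$, forcing $Q\subseteq M$. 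Hence $P=Q\cap R\subseteq M$.

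With the claim in hand, I would bound an arbitrary chain $P_0\subsetneq\cdots\subsetneq P_n$ in $\Spec_h(R)$ by cases. If no $P_j$ contains $M$, contraction via Proposition \ref{pull}(3) yields a chain of length $n$ in $\Spec_h(T)$, so $n\leq\mathrm{h}\text{-}\dim(T)$. Otherwise let $j_0$ be the smallest index with $M\subseteq P_{j_0}$; if $j_0>0$ and $P_{j_0}\neq M$, the claim applied to $P_{j_0-1}\subsetneq P_{j_0}$ yields $P_{j_0-1}\subsetneq M$, so I may insert $M$ to obtain an extended chain of length $n+1$ passing through $M$. Once the (possibly extended) chain contains $M$, splitting it at $M$ and invoking Lemma \ref{h} for the part below, together with Proposition \ref{pull}(6) (adjoining $(0)$ at the bottom if necessary) for the part above, bounds the total length by $\mathrm{h}\text{-}\mathrm{ht}_T(M)+\mathrm{h}\text{-}\dim(D)$; hence $n\leq\mathrm{h}\text{-}\mathrm{ht}_T(M)+\mathrm{h}\text{-}\dim(D)$. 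The main obstacle is the key claim, whose proof depends crucially on the identification of localizations in Proposition \ref{pull}(8); once it is secured, the rest is routine chain bookkeeping.
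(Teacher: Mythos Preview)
Your proof is correct and takes a genuinely different route from the paper. The paper argues by localization: it picks $P\in\Max_h(R)$ realizing $\mathrm{h}\text{-}\dim(R)=\mathrm{h}\text{-}\dim(R_{H\setminus P})$ and, according as $P\supseteq M$ or not, passes to a localized pullback of type $\triangle$ where the additive formula of Remark~\ref{lll} applies, combined with Lemma~\ref{h} and Proposition~\ref{pull}(8),(9); contradiction arguments then show that the relevant localizations of $T$ and $D$ realize $\mathrm{h}\text{-}\dim(T)$ and $\mathrm{h}\text{-}\dim(D)$. Your argument is more elementary: explicit chains for the lower bound, and for the upper bound a direct splitting of an arbitrary chain, hinging on the claim that $P\subsetneq P'$ with $M\subseteq P'$ and $M\not\subseteq P$ forces $P\subseteq M$.

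Two comments. First, your word ``contraction'' for the passage from $\Spec_h(R)$ to $\Spec_h(T)$ via Proposition~\ref{pull}(3) is a misnomer (it is the unique lift), and you should say explicitly why $P_i\subsetneq P_{i+1}$ implies $Q_i\subsetneq Q_{i+1}$; this follows from $R_{P_i}=T_{Q_i}$ in Proposition~\ref{pull}(3), since $T_{Q_i}=R_{P_i}\supsetneq R_{P_{i+1}}=T_{Q_{i+1}}$ forces $Q_i\subsetneq Q_{i+1}$. Second---and this is the interesting observation---despite your remark that ``the hypothesis $k=F$ enters'' in the upper bound, your argument never actually uses it: the key claim rests only on Proposition~\ref{pull}(3) and~(8), both of which hold for an arbitrary pullback of type~$\triangle$. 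Thus your approach in fact proves Proposition~\ref{h-dim} directly, bypassing the paper's two-step reduction through Lemmas~\ref{ll} and~\ref{lemf} and the splitting of the diagram via $S=\varphi^{-1}(F)$.
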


\begin{proof} We have
$\text{h-}\dim(R)=\sup\{\text{h-}\dim(R_{H\setminus P})|P\in \Max_h(R)\}$. Let $P\in \Max_h(R)$ be such that $\text{h-}\dim(R)=\text{h-}\dim(R_{H\setminus P})$. We have two cases to consider:

\textbf{Case 1}: Assume that $P\not\supset M$. Then $R_{H\setminus P}=T_{H\setminus Q}$ for some $Q\in\Spec_h(T)$ such that $P=Q\cap R$ by Proposition \ref{pull}(3). Hence $\text{h-}\dim(R)=\text{h-}\dim(R_{H\setminus P})=\text{h-}\dim(T_{H\setminus Q})$. Now we claim that
h-$\dim(T)=\text{h-}\dim(T_{H\setminus Q})$. Suppose, contrary to our claim, that there exists $L\in \Max_h(T)$ such that h-$\dim(T)=\text{h-}\dim(T_{H\setminus L})$ and $\text{h-}\dim(T_{H\setminus Q})\lneq\text{h-}\dim(T_{H\setminus L})$. Set $P_1:=L\cap R$, which is a homogeneous prime ideal of $R$. Consequently
$R_{P_1}=T_L$ by \cite[Remark 2.2(5)]{cs18a} and hence using Proposition \ref{pull}(2) $R_{H\setminus P_1}=R_H\cap R_{P_1}=T_H\cap T_L=T_{H\setminus L}$. Thus
$\text{h-}\dim(R)=$h-$\dim(R_{H\setminus P})=$h-$\dim(T_{H\setminus Q})\lneq$h-$\dim(T_{H\setminus L})=$h-$\dim(R_{H\setminus P_1})$, which is a contradiction. It follows that h-$\dim(R)=\text{h-}\dim(R_{H\setminus P})=\text{h-}\dim(T_{H\setminus Q})=\text{h-}\dim(T)$. On the other hand by Proposition \ref{pull}(6) we have $D=R/M$; so that $\mathrm{h}\text{-}\mathrm{ht}_R(M)+\mathrm{h}\text{-}\dim(D)\leq\mathrm{h}\text{-}\dim(R)=\mathrm{h}\text{-}\dim(T)$. Note that by Lemma \ref{h}, we have $\text{h-}\mathrm{ht}_R(M)=\text{h-}\mathrm{ht}_T(M)$. Therefore in this case we have the equality
$$\mathrm{h}\text{-}\dim(R)=\mathrm{h}\text{-}\dim(T)=\max\{\mathrm{h}\text{-}\mathrm{ht}_T(M)+\mathrm{h}\text{-}\dim(D),\mathrm{h}\text{-}\dim(T)\}.$$

\textbf{Case 2}:  Assume that $P\supseteq M$ and consider the following pullback of type $\triangle$ (see \cite[Remark 2.2(1)]{cs18a})
\begin{displaymath}
\xymatrix{ R_{H\setminus P} \ar[r] \ar[d] &
D_{\varphi(H\setminus P)} \ar[d] \\
T_{H\setminus P} \ar[r] & k. }
\end{displaymath}
Hence h-$\dim(R_{H\setminus P})=$h-$\dim(D_{\varphi(H\setminus P)})+$h-$\dim(T_{H\setminus P})$ by Remark \ref{lll}. We claim that h-$\dim(D)=$h-$\dim(D_{\varphi(H\setminus P)})$. Suppose, contrary to our claim, that there exists $L\in\Max_h(D)$ such that h-$\dim(D)=$h-$\dim(D_{H(D)\setminus L})$ and h-$\dim(D_{\varphi(H\setminus P)})\lneq$h-$\dim(D_{H(D)\setminus L})$. Set $P_1:=\varphi^{-1}(L)$, which is a homogeneous prime ideal of $R$. Consequently
$$\text{h-}\dim(R_{H\setminus P_1})=\text{h-}\dim(D_{\varphi(H\setminus P_1)})+\text{h-}\dim(T_{H\setminus P_1})$$ by (choosing $N:=H\setminus P_1$ in) \cite[Remark 2.2(1)]{cs18a}, Remark \ref{lll}, and noting that $\varphi(H\setminus P_1)=\varphi(H)\setminus\varphi(P_1)=H(D)\setminus L$. Thus keeping in mind that $T_{H\setminus P}=T_{H\setminus M}=T_{H\setminus P_1}$ by Proposition \ref{pull}(8) we have
\begin{align*}
\text{h-}\dim(R)&=\text{h-}\dim(R_{H\setminus P})\\
&=\text{h-}\dim(D_{\varphi(H\setminus P)})+\text{h-}\dim(T_{H\setminus P})\\
& \lneq\text{h-}\dim(D_{H(D)\setminus L})+\text{h-}\dim(T_{H\setminus P})\\
&=\text{h-}\dim(D_{H(D)\setminus L})+\text{h-}\dim(T_{H\setminus P_1})\\
&=\text{h-}\dim(R_{H\setminus P_1}),
\end{align*}
which is a contradiction. Thus
\begin{align*}
\text{h-}\dim(R)&=\text{h-}\dim(R_{H\setminus P})\\
&=\text{h-}\dim(D)+\text{h-}\dim(T_{H\setminus M})\\
&=\text{h-}\dim(D)+\text{h-}\mathrm{ht}_T(M).
\end{align*}
Let $Q\in\Max_h(T)$ such that $\text{h-}\dim(T)=\text{h-}\dim(T_{H(T)\setminus Q})$. If $Q=M$, we have $\text{h-}\dim(T)=\text{h-}\mathrm{ht}_T(M)=\text{h-}\mathrm{ht}_R(M)\leq\text{h-}\dim(R)$, and if $Q\neq M$ we have $R_{Q\cap R}=T_Q$ by Proposition \ref{pull}(4). Thus $R_{H\setminus Q\cap R}=T_{H(T)\setminus Q}$ using Proposition \ref{pull}(2). Hence $\text{h-}\dim(T)=\text{h-}\dim(T_{H(T)\setminus Q})=\text{h-}\dim(R_{H\setminus Q\cap R})\leq\text{h-}\dim(R)$. Therefore here also we have the equality
$$\mathrm{h}\text{-}\dim(R)=\max\{\mathrm{h}\text{-}\mathrm{ht}_T(M)+\mathrm{h}\text{-}\dim(D),\mathrm{h}\text{-}\dim(T)\}.$$
\end{proof}

\begin{lemma}\label{lemf} For a pullback diagram of type $\triangle$ assume that $D=F$. Then  $\text{h-}\dim(R)=\text{h-}\dim(T)$.
\end{lemma}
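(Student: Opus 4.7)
The plan is to exploit the hypothesis $D=F$, which makes $D$ a graded field. Hence $\mathrm{h}\text{-}\dim(D)=0$, and by Proposition \ref{pull}(6) the only homogeneous prime of $R$ that contains $M$ is $M$ itself; in particular $M\in\Max_h(R)$. Combined with Proposition \ref{pull}(3)--(4), the rest of $\Spec_h(R)$ consists of the contractions $Q\cap R$ of homogeneous primes $Q\in\Spec_h(T)\setminus\{M\}$, and the assignment $Q\mapsto Q\cap R$ is a strictly order-preserving bijection onto that part (strictness coming from the uniqueness statement in Proposition \ref{pull}(3)).

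To show $\mathrm{h}\text{-}\dim(R)\le\mathrm{h}\text{-}\dim(T)$, I would take an arbitrary chain $P_0\subsetneq\cdots\subsetneq P_n$ in $\Spec_h(R)$ and split into two cases. If the chain contains $M$, then as $M\in\Max_h(R)$ it must be the top element $P_n$, and Lemma \ref{h} gives $n\le\mathrm{h}\text{-}\mathrm{ht}_R(M)=\mathrm{h}\text{-}\mathrm{ht}_T(M)\le\mathrm{h}\text{-}\dim(T)$. If $M$ is absent from the chain, then no $P_i$ contains $M$ (otherwise $P_i=M$, since $M$ is the only homogeneous prime of $R$ above $M$), and the bijection lifts the chain to a strict chain in $\Spec_h(T)$ of the same length, giving $n\le\mathrm{h}\text{-}\dim(T)$.

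For the reverse inequality I would start with a chain $Q_0\subsetneq\cdots\subsetneq Q_n$ in $\Spec_h(T)$. If $Q_n\ne M$, then $M\not\subseteq Q_i$ for every $i$ (since $M$ is maximal homogeneous in $T$), so contracting along the bijection produces a strict chain in $\Spec_h(R)$ of length $n$. If instead $Q_n=M$, then $Q_0,\dots,Q_{n-1}$ all fail to contain $M$, and their contractions together with $M$ as the top form a chain of length $n$ in $R$ (strictness at the top coming from $Q_{n-1}\subsetneq M$, hence $Q_{n-1}\cap R\subsetneq M$). Combining both inequalities gives $\mathrm{h}\text{-}\dim(R)=\mathrm{h}\text{-}\dim(T)$. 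The only nontrivial input is Lemma \ref{h}; the main obstacle is bookkeeping, namely tracking the position of $M$ in each chain and verifying strictness under contraction.
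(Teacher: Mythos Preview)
Your argument is correct and takes a genuinely different route from the paper. The paper computes $\mathrm{h}\text{-}\dim(R)$ as a supremum of $\mathrm{h}\text{-}\dim(R_{H\setminus P})$ over $P\in\Max_h(R)$, picks a maximizing $P$, and treats the cases $P=M$ and $P\neq M$ separately, using a forward reference to Remark~\ref{rem} (that $\Spec_h(R)=\Spec_h(T)$ in the local situation) together with contradiction arguments to identify the localized dimension with $\mathrm{h}\text{-}\dim(T)$. Your approach is more elementary: you compare chains directly via the contraction bijection $\Spec_h(T)\setminus\{M\}\to\Spec_h(R)\setminus\{M\}$, with Lemma~\ref{h} handling the case where $M$ appears at the top of a chain. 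This avoids both the localization machinery and the forward reference, at the cost of a little bookkeeping.

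One point deserves more care. You state that $Q\mapsto Q\cap R$ is a strictly order-preserving bijection and then ``lift'' a chain from $R$ to $T$. Strictness of the contraction follows from uniqueness, but lifting requires the \emph{inverse} to preserve order: a priori the unique primes $Q_1,Q_2$ lying over $P_1\subsetneq P_2$ could be incomparable. This is easy to repair using the equality $R_{P_2}=T_{Q_2}$ in Proposition~\ref{pull}(3) (primes of $R$ inside $P_2$ correspond to primes of $T$ inside $Q_2$, and the one corresponding to $P_1$ must be $Q_1$ by uniqueness), or by a direct conductor argument: choose a homogeneous $t\in M\setminus P_2$; then for homogeneous $a\in Q_1$ one has $ta\in M\subseteq R$, so $ta\in Q_1\cap R=P_1\subseteq P_2\subseteq Q_2$, while $t\notin Q_2$ since $t\in R\setminus P_2$, hence $a\in Q_2$.
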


\begin{proof} We have
$\text{h-}\dim(R)=\sup\{\text{h-}\dim(R_{H\setminus P})|P\in \Max_h(R)\}$. Let $P\in \Max_h(R)$ such that
$\text{h-}\dim(R)=\text{h-}\dim(R_{H\setminus P})$.

\textbf{Case 1}: Assume that $P=M$. Consider the following pullback of type $\triangle$ (see \cite[Remark 2.2(1)]{cs18a})
\begin{displaymath}
\xymatrix{ R_{H\setminus M} \ar[r] \ar[d] &
F \ar[d] \\
T_{H\setminus M} \ar[r] & k. }
\end{displaymath}
Then we have $\text{h-}\dim(R_{H\setminus M})=\text{h-}\dim(T_{H\setminus M})$ using Remark \ref{rem} below. We claim that $\text{h-}\dim(T_{H\setminus M})=\text{h-}\dim(T)$. Assume to the contrary that there exists a maximal homogeneous ideal $L$ of $T$ such that $\text{h-}\dim(T_{H\setminus M})\lneq\text{h-}\dim(T_{H\setminus L})=\text{h-}\dim(T)$. Hence by Proposition \ref{pull}(4) we have $R_{H\setminus L\cap R}=T_{H\setminus L}$. So that $$\text{h-}\dim(R)=\text{h-}\dim(T_{H\setminus M})\lneq\text{h-}\dim(T_{H\setminus L})=\text{h-}\dim(R_{H\setminus L\cap R}),$$ which is a contradiction. Therefore in this case we have $\text{h-}\dim(R)=\text{h-}\dim(T)$.

\textbf{Case 2}: Assume that $P\neq M$. Then $R_{H\setminus P}=T_{H\setminus Q}$ for some $Q\in\Spec_h(T)$ such that $P=Q\cap R$ by Proposition \ref{pull}(3). We claim that h-$\dim(T)=\text{h-}\dim(T_{H\setminus Q})$. Suppose, contrary to our claim, that there exists $L\in\Max_h(R)$ such that h-$\dim(T)=\text{h-}\dim(T_{H\setminus L})$ and
$\text{h-}\dim(T_{H\setminus Q})\lneq\text{h-}\dim(T_{H\setminus L})$. Set $P_1:=L\cap R$. If $P_1\not\supset M$ then $R_{H\setminus P_1}=T_{H\setminus L}$ by Proposition \ref{pull}(3). Thus $$\text{h-}\dim(R)=\text{h-}\dim(R_{H\setminus P})=\text{h-}\dim(T_{H\setminus Q})\lneq\text{h-}\dim(T_{H\setminus L})=\text{h-}\dim(R_{H\setminus P_1}),$$ which is again a contradiction. Therefore $$\text{h-}\dim(R)=\text{h-}\dim(R_{H\setminus P})=\text{h-}\dim(T_{H\setminus Q})=\text{h-}\dim(T).$$
\end{proof}

Splitting the pullback diagram of type $\triangle$ into two parts (as in the proof of Lemma \ref{h}), then using Lemmas \ref{h}, \ref{ll}, and \ref{lemf}, we have the following graded version of  \cite[Corollary 9]{br76} and \cite[Corollary 1.10]{gh00}.

\begin{proposition}\label{h-dim}
Consider a pullback diagram of type $\bigtriangleup$. Then
$$\mathrm{h}\text{-}\dim(R)=\max\{\mathrm{h}\text{-}\mathrm{ht}_T(M)+\mathrm{h}\text{-}\dim(D),\mathrm{h}\text{-}\dim(T)\}.$$
\end{proposition}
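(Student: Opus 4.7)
The plan is to follow the hint given right before the statement: split the pullback diagram of type $\triangle$ into the two stacked squares
\begin{displaymath}
\xymatrix{ R \ar[r] \ar[d] & D \ar[d] \\ S := \varphi^{-1}(F) \ar[r] \ar[d] & F \ar[d] \\ T \ar[r]^{\varphi} & k, }
\end{displaymath}
and then invoke the three preceding lemmas on the appropriate pieces. The upper square is itself a pullback of type $\triangle$ in which the quotient of the ring in the lower-left corner is exactly the homogeneous quotient field of the graded subring in the upper-right, while the lower square is a pullback of type $\triangle$ in which $D = F$.

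First, I would verify that both squares are pullbacks of type $\triangle$: in each case $M$ is a maximal homogeneous ideal (of $S$ in the upper square and of $T$ in the lower one), the vertical map is the canonical quotient, and the upper-left corner is the preimage of the graded subring in the upper-right corner. For the upper square, the quotient $S/M$ equals $F$, which is precisely the homogeneous quotient field of $D$, so Lemma \ref{ll} applies and yields
$$\text{h-}\dim(R) = \max\{\text{h-}\mathrm{ht}_S(M) + \text{h-}\dim(D),\ \text{h-}\dim(S)\}.$$
For the lower square we have $D = F$, so Lemma \ref{lemf} applies and gives $\text{h-}\dim(S) = \text{h-}\dim(T)$, while Lemma \ref{h} applied to the lower square gives $\text{h-}\mathrm{ht}_S(M) = \text{h-}\mathrm{ht}_T(M)$.

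Finally, substituting these two equalities into the formula produced by Lemma \ref{ll} yields
$$\text{h-}\dim(R) = \max\{\text{h-}\mathrm{ht}_T(M) + \text{h-}\dim(D),\ \text{h-}\dim(T)\},$$
which is exactly the asserted identity. The only point requiring care, and so the main (small) obstacle, is the bookkeeping that confirms each split square genuinely is a pullback of type $\triangle$ satisfying the hypotheses of the lemma being applied; once that is granted, the proof is a direct three-line substitution.
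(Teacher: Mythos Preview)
Your proposal is correct and follows exactly the approach the paper takes: split the type-$\triangle$ diagram through $S=\varphi^{-1}(F)$, apply Lemma~\ref{ll} to the upper square (where $S/M=F$ is the homogeneous quotient field of $D$), and then use Lemma~\ref{lemf} and Lemma~\ref{h} on the lower square (where the graded subring is $F$) to replace $\mathrm{h}\text{-}\dim(S)$ and $\mathrm{h}\text{-}\mathrm{ht}_S(M)$ by $\mathrm{h}\text{-}\dim(T)$ and $\mathrm{h}\text{-}\mathrm{ht}_T(M)$. The paper's own proof is precisely this three-lemma substitution, stated in one sentence.
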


\section{When is a pullback a graded going-down domain?}

The pullback constructions are one of important tools to construct examples and counterexamples. In \cite[Theorem 2.4]{d09}, D. E. Dobbs introduced a portable property about integral domains and provided the behaviour of his property under pullbacks. The $\GD$ property is a special case of portable property. More precisely, he first did it for the CPI-extension (complete pre-image extension of Boisen and Sheldon \cite{bs77}). This section is devoted to investigate the $\gGD$ property for graded integral domains under graded pullbacks.

As Dobbs, we should consider the graded analogue of CPI-extension  being $\gGD$ domain. We recall the graded analogue of CPI-extension as follows. For a graded domain $R=\bigoplus_{\alpha\in\Gamma}R_{\alpha}$ and a homogeneous prime ideal $P$ of  $R$, it is the following pullback of type $\triangle$ (see \cite[Page 629]{cs18a}):

\begin{displaymath}
\xymatrix{ R(P):=\varphi^{-1}(R/P) \ar[r] \ar[d] &
R/P \ar[d] \\
R_{H\setminus P} \ar[r]^{\varphi} &
k=\frac{R_{H\setminus P}}{PR_{H\setminus P}}. }
\end{displaymath}

It can be seen that $R(P)=R+PR_{H\setminus P}$ is a graded
integral domain with $R(P)_{\alpha} = \{x \in R_{H \setminus P} \mid
\varphi(x) \in (R/P)_{\alpha}\}$ for all $\alpha \in \Gamma$.

To investigate the $\gGD$ property for the graded CPI-extension, we need the graded versions of \cite[Lemma 2.1 and Theorem 2.2]{d97}. Recall from \cite{ss23} that a homogeneous prime ideal $P$ of $R=\bigoplus_{\alpha\in\Gamma}R_{\alpha}$ is called a \emph{homogeneous divided prime ideal} if each element of $H\setminus P$ divides each element of $P$. It can be easily seen that $P$ is a homogeneous divided prime ideal if and only if $P$ is comparable to each homogeneous ideal of $R$ if and only if $P=PR_{H\setminus P}$.

\begin{lemma}\label{ggdtop}
Let $R=\bigoplus_{\alpha\in\Gamma}R_{\alpha}$ be a graded domain and $P$ be a homogeneous divided prime ideal of $R$. Then each extension of graded domains $R\subseteq T$ satisfies graded going-down to $P$.
\end{lemma}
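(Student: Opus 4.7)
The plan is to descend from $Q_1$ to a suitable $Q$ by a minimal-prime construction, using the divided condition to pin the contraction at $P$. First I would pass to the graded localization $T_{Q_1}:=T_{H(T)\setminus Q_1}$; every homogeneous prime of $T_{Q_1}$ is contained in the homogeneous maximal ideal $Q_1T_{Q_1}$. Since $P\subseteq P_1\subseteq Q_1$, the homogeneous ideal $PT_{Q_1}$ sits inside $Q_1T_{Q_1}$ and is proper. Choose a minimal prime $Q'$ of $PT_{Q_1}$ in $T_{Q_1}$; it is automatically homogeneous, since minimal primes of homogeneous ideals are homogeneous (recalled in Section 1 of the paper). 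Contracting $Q'$ back to $T$ gives a homogeneous prime $Q\subseteq Q_1$ of $T$ containing $PT$, so in particular $P\subseteq Q\cap R$.

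The crux is the reverse inclusion $Q\cap R\subseteq P$. Suppose toward a contradiction that some homogeneous $a\in Q\cap R$ lies outside $P$. Since $P$ is a homogeneous divided prime, $P\subseteq aR$, and each identity $p=aq$ with $p\in P$ and $a\notin P$ forces $q\in P$ by primeness. Thus $P=aP$, and the same relation propagates to $T$, giving $PT=aPT$. This yields the non-zero-divisor statement: if $at\in PT$ for some $t\in T$, write $at=as$ with $s\in PT$ and cancel $a$ in the domain $T$ to get $t=s\in PT$. A routine localization check carries this non-zero-divisor property up to $T_{Q_1}$ modulo $PT_{Q_1}$, and then to the local ring $(T_{Q_1})_{Q'}$ modulo $PT_{Q_1,Q'}$.

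Minimality of $Q'$ over $PT_{Q_1}$ means the maximal ideal of $(T_{Q_1})_{Q'}$ is the radical of $PT_{Q_1,Q'}$, so $a\in Q'$ produces some $n$ with $a^n\in PT_{Q_1,Q'}$. Repeated cancellation of $a$ via the non-zero-divisor property shrinks the exponent until $a\in PT_{Q_1,Q'}$, and one more cancellation gives $1\in PT_{Q_1,Q'}$, contradicting that this ideal lies inside the maximal ideal of $(T_{Q_1})_{Q'}$. Hence no such $a$ exists, so $Q\cap R=P$ and $Q$ is the desired prime. The main obstacle I foresee is the graded bookkeeping through two successive localizations; once the equality $P=aP$ is extracted from the divided hypothesis, the ensuing cancellation arguments fall into place quickly.
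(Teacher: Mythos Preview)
Your argument is correct and takes a genuinely different route from the paper's. The paper argues by contradiction, passes to a gr-valuation overring $V$ of $T$, invokes the Kaplansky-type failure criterion to produce an equation $\sum p_iv_i=rz$ with $r\in R\setminus P$ and $z\in V\setminus L$, and then carries out a lengthy homogeneous-component analysis (tracking minimal-degree terms through an array) before finally using $PR_{H\setminus P}=P$ to force a contradiction. Your proof is more direct: you build the candidate $Q$ as the contraction of a minimal homogeneous prime of $PT_{Q_1}$, and the divided hypothesis enters through the clean identity $P=aP$ for any homogeneous $a\in R\setminus P$. That identity immediately makes $a$ a non-zero-divisor on $T/PT$ (hence on every localization), which is incompatible with $a$ lying in a minimal prime of $PT$---so the contraction is pinned at $P$ without any valuation-theoretic detour or component bookkeeping.

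Two small remarks that tighten your write-up. First, the double localization is unnecessary: since any prime of $T$ strictly below the homogeneous prime $Q$ already has its homogeneous part inside $Q\subseteq Q_1$, it misses $H(T)\setminus Q_1$; hence minimality of $Q'$ over $PT_{Q_1}$ forces $Q$ itself to be a minimal prime of $PT$ in $T$, and you can run the nilpotence-plus-cancellation argument in $T_Q$ directly. Second, once you have that $a$ is a non-zero-divisor on $T/PT$, you can simply observe that such an element cannot lie in any minimal prime of $PT$ (a standard fact), which finishes the proof in one line. Either way, your approach is sound and noticeably shorter than the paper's, while the paper's approach has the virtue of mirroring Dobbs's classical ungraded argument step by step.
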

\begin{proof} Suppose to the contrary that there exist homogeneous prime ideals $P\subseteq P_1$ of $R$ and $Q_1$ of $T$ satisfying $Q_1\cap R=P_1$, and no $Q\in\Spec_h(T)$ satisfying both $Q\subseteq Q_1$ and $Q\cap R=P$. Choose by \cite[Lemma 4.4]{co2018}, a gr-valuation overring $(V,N_1)$ of $T$ such that $N_1\cap T=Q_1$. Then $R\subseteq V$ does not satisfy graded going-down to $P$, since, if there exists $N\in\Spec_h(V)$ satisfying $N\cap R=P$, then $Q:=N\cap T$ would provide a contradiction. Accordingly, by \cite[Exercise 37, Page 44]{k74}, there exists $L\in\Spec_h(V)$ such that $L$ is a minimal prime ideal of $PV$ and
$$\Sigma p_iv_i=rz$$ for some $p_i\in P$, $v_i\in V$, $r\in R\setminus P$ and $z\in V\setminus L$.

Let $r=r_{{\alpha}_1}+\cdots+r_{{\alpha}_n}$ be the decomposition of $r$ into homogeneous components of degrees ${\alpha}_1,\ldots,{\alpha}_n$ with ${\alpha}_1<\cdots<{\alpha}_n$. Since $\Sigma p_iv_i-\Sigma_{r_{{\alpha}_j}\in P}r_{{\alpha}_j}z=\Sigma_{r_{{\alpha}_j}\notin P}r_{{\alpha}_j}z$, we can assume that $r_{{\alpha}_j}\in H\setminus P$ for $j=1,\ldots,n$. By decomposing $p_i$ and $v_i$ into homogeneous elements, we can assume that $p_i$ and $v_i$ are homogeneous elements. Now let $z=z_{{\beta}_1}+\cdots+z_{{\beta}_s}+\cdots+z_{{\beta}_m}$ be the decomposition of $z$ into homogeneous components of degrees ${\beta}_1,\ldots,{\beta}_m$ with ${\beta}_1<\cdots<{\beta}_m$, such that ${\beta}_s$ is the smallest degree such that $z_{{\beta}_s}\in V\setminus L$.
The right hand side of the equality
\begin{equation}\label{(0)}
    \Sigma p_iv_i=(r_{{\alpha}_1}+\cdots+r_{{\alpha}_n})(z_{{\beta}_1}+\cdots+z_{{\beta}_s}+\cdots+z_{{\beta}_m})
\end{equation}
is the sum of the entries of the following array:
$$
\begin{array}{ccccccc}
  r_{{\alpha}_1}z_{{\beta}_1} & r_{{\alpha}_1}z_{{\beta}_2} & r_{{\alpha}_1}z_{{\beta}_3} & \cdots & r_{{\alpha}_1}z_{{\beta}_s} & \cdots & r_{{\alpha}_1}z_{{\beta}_m} \\
  r_{{\alpha}_2}z_{{\beta}_1} & r_{{\alpha}_2}z_{{\beta}_2} & r_{{\alpha}_2}z_{{\beta}_3} & \cdots & r_{{\alpha}_2}z_{{\beta}_s} & \cdots & r_{{\alpha}_2}z_{{\beta}_m} \\
  r_{{\alpha}_3}z_{{\beta}_1} & r_{{\alpha}_3}z_{{\beta}_2} & r_{{\alpha}_3}z_{{\beta}_3} & \cdots & r_{{\alpha}_3}z_{{\beta}_s} & \cdots & r_{{\alpha}_3}z_{{\beta}_m} \\
  \vdots & \vdots & \vdots &  & \vdots &  & \vdots \\
  r_{{\alpha}_n}z_{{\beta}_1} & r_{{\alpha}_n}z_{{\beta}_2} & r_{{\alpha}_n}z_{{\beta}_3} & \cdots & r_{{\alpha}_n}z_{{\beta}_s} & \cdots & r_{{\alpha}_n}z_{{\beta}_m}.
\end{array}
$$
The component $r_{{\alpha}_1}z_{{\beta}_1}$ has the smallest degree $\alpha_1+\beta_1$ on both sides of the equality (\ref{(0)}).
By canceling   $\Sigma_{\deg(p_iv_i)=\alpha_1+\beta_1} p_iv_i=r_{{\alpha}_1}z_{{\beta}_1}$ from both sides of the equality (\ref{(0)}), we obtain the equality
\begin{equation}\label{(1)}
    \Sigma p_iv_i=\Sigma_{j+k\geq3}r_{{\alpha}_j}z_{{\beta}_k}.
\end{equation}
Now the component of the smallest degree in the right side of the equation (\ref{(1)}) is $r_{{\alpha}_1}z_{{\beta}_2}$ or $r_{{\alpha}_2}z_{{\beta}_1}$ or $r_{{\alpha}_1}z_{{\beta}_2}+z_{{\alpha}_2}t_{{\beta}_1}$. In any case we can cancel these components from both sides of the equality (\ref{(1)}) to obtain the equality
\begin{equation*}\label{(2)}
   \Sigma p_iv_i=\Sigma_{j+k\geq4}r_{{\alpha}_j}z_{{\beta}_k}.
\end{equation*}
Continuing this procedure we obtain an equality like
\begin{equation}\label{(3)}
    \Sigma p_iv_i=\Sigma_{j+k\geq s+1}r_{{\alpha}_j}z_{{\beta}_k}.
\end{equation}
Set $X:=\{r_{{\alpha}_j}z_{{\beta}_k}\mid \alpha_j+\beta_k=\alpha_1+\beta_s$ and $k\neq s\}$. Note that if $r_{{\alpha}_j}z_{{\beta}_k}\in X$, then $j\geq2$ and $k\leq s-1$. This means that $X\subseteq L$. The component of the smallest degree in the right side of the equation (\ref{(3)}) is one of $r_{{\alpha}_j}z_{{\beta}_k}$ with $\alpha_j+\beta_k=\alpha_1+\beta_s$ or a sum of some of these elements. If this component of the smallest degree is of the form $\Sigma_{x\in A}x$ for some $A\subseteq X$, then we can cancel this component from both sides of the equality (\ref{(3)}). After that if $\Sigma_{x\in B}x$ for some $B\subseteq X$ has the smallest degree, delete this component too and so on. Hence we can assume that
\begin{equation}\label{(4)}
    r_{{\alpha}_1}z_{{\beta}_s}+\Sigma_{x\in C}x=\Sigma p_iv_i
\end{equation}
has the smallest degree for some $C\subseteq X$. Since $PV\subseteq L$, we have $r_{{\alpha}_1}z_{{\beta}_s}\in L$. But $z_{{\beta}_s}\notin L$, and so that $r_{{\alpha}_1}\in L$. Since $L$ is a minimal prime ideal of $PV$, using \cite[Theorem 2.1]{Huck} there exist an element $y\in T\setminus Q$, and a nonnegative integer $n$ such that $r_{{\alpha}_1}^ny\in PV$. Note that we can assume that $y$ is a homogeneous element. Indeed let $y=y_{{\gamma}_1}+\cdots+y_{{\gamma}_p}+\cdots+y_{{\gamma}_{\ell}}$ be the decomposition of $y$ into homogeneous elements and assume that $y_{{\gamma}_p}\notin L$. Thus $r_{{\alpha}_1}^ny_{{\gamma}_1}+\cdots+r_{{\alpha}_1}^ny_{{\gamma}_p}+\cdots+r_{{\alpha}_1}^ny_{{\gamma}_{\ell}}=r_{{\alpha}_1}^ny\in PV$, and so that $r_{{\alpha}_1}^ny_{{\gamma}_p}\in PV$. Hence we can assume that $y$ is homogeneous. Thus $z_{{\beta}_s}y\in V\setminus L$ is a homogeneous element. Multiplying both sides of (\ref{(4)}) by $r_{{\alpha}_1}^{n}y$, we obtain
$$r_{{\alpha}_1}^{n+1}(z_{{\beta}_s}y)+r_{{\alpha}_1}^ny(\Sigma_{x\in C}x)=\Sigma r_{{\alpha}_1}^np_iv_iy.$$ This means that $r_{{\alpha}_1}^{n+1}(z_{{\beta}_s}y)\in PV$. Whence $r_{{\alpha}_1}^{n+1}(z_{{\beta}_s}y)=\Sigma p_iv_i$ for some homogeneous elements $p_i\in P$ and $v_i\in V$. Since $V$ is a gr-valuation domain, we may assume that $Vp_i\subseteq Vp_1$ for all $i$. Thus $p_i=a_ip_1$, with $a_i\in V$, it follows that $r_{{\alpha}_1}^{n+1}(z_{{\beta}_s}y)=p_1v$, with $v=\Sigma a_iv_i\in V$. Now, $p:=p_1(r_{{\alpha}_1}^{n+1})^{-1}\in PR_{H\setminus P}=P$, whence $z_{{\beta}_s}y=pv\in PV\cap(V\setminus L)=\emptyset$, a contradiction.
\end{proof}

For the proof of the following proposition we use \cite[Theorem 3.12]{ss23} that a graded domain $R$ is a $\gGD$ domain if and only if $R\subseteq V$ satisfies $\gGD$ for each gr-valuation overring $V$ of $R$.

\begin{proposition}\label{lem2}
Let $R=\bigoplus_{\alpha\in\Gamma}R_{\alpha}$ be a graded domain. The following are equivalent:
\begin{enumerate}
  \item There exists a divided homogeneous prime ideal $P$ of $R$ such that  $R/P$ and $R_{H\setminus P}$ are $\gGD$ domains.
  \item $R$ is a $\gGD$ domain.
\end{enumerate}
\end{proposition}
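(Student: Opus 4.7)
The direction $(2)\Rightarrow(1)$ I would dispatch by taking $P=(0)$: this is trivially a divided homogeneous prime of $R$, and $R/(0)=R$ is a $\gGD$ domain by hypothesis, while in $R_{H\setminus(0)}=R_H$ every nonzero homogeneous element is invertible, so $(0)$ is its only homogeneous prime and $R_H$ is vacuously a $\gGD$ domain.

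For $(1)\Rightarrow(2)$, by \cite[Theorem 3.12]{ss23} it suffices to show that $R\subseteq V$ satisfies $\gGD$ for every gr-valuation overring $V$ of $R$. I fix such a $V$, a chain of homogeneous primes $P_0\subseteq P_1$ of $R$, and a homogeneous prime $Q_1$ of $V$ with $Q_1\cap R=P_1$, and aim to produce $Q_0\subseteq Q_1$ in $V$ with $Q_0\cap R=P_0$. The divided hypothesis forces $P$ to be comparable with every homogeneous ideal of $R$, leading to three sub-cases. When $P_1\subseteq P$, the set $H\setminus P$ is disjoint from $Q_1$, so $V_{H\setminus P}$ is a gr-valuation overring of the $\gGD$ domain $R_{H\setminus P}$; the required prime is extracted there and contracted back to $V$. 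When $P_1\supseteq P$, I first apply Lemma \ref{ggdtop} (whose hypotheses are met since $P$ is divided) to obtain a homogeneous prime $Q^*\subseteq Q_1$ of $V$ with $Q^*\cap R=P$, and if in addition $P_0\subseteq P$ I reduce to the previous case with $Q^*$ in place of $Q_1$.

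The substantive case is $P\subseteq P_0\subseteq P_1$, where I would use the $\gGD$ hypothesis on $R/P$. With $Q^*\subseteq Q_1$ as above, I consider the extension of graded domains $R/P\hookrightarrow V/Q^*$, whose codomain is still a gr-valuation domain. The plan is to apply $\gGD$ of $R/P$ to the chain $P_0/P\subseteq P_1/P$ together with the homogeneous prime $Q_1/Q^*$ of $V/Q^*$, obtaining $Q_0/Q^*\subseteq Q_1/Q^*$ with contraction $P_0/P$, and then to lift $Q_0$ back to $V$. The main obstacle is to certify that $V/Q^*$ qualifies as a homogeneous overring of $R/P$, so that the $\gGD$ property of $R/P$ legitimately transfers to this extension; equivalently, I must show that every homogeneous element of $V/Q^*$ admits a representative $\bar a/\bar b$ with $b\in H\setminus P$. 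The divided hypothesis yields $PR_{H\setminus P}=P$ and hence the identification $(R/P)_{H(R/P)}=R_{H\setminus P}/P$, and combining this with the gr-valuation structure of $V$ together with the relation $Q^*\cap R=P$ should allow an appropriate divisibility argument in $R$ to clear any denominator lying in $P$.
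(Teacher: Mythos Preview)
Your overall strategy coincides with the paper's: $(2)\Rightarrow(1)$ via $P=0$; $(1)\Rightarrow(2)$ via \cite[Theorem~3.12]{ss23} and a three-case split according to the position of $P$ relative to $P_0$ and $P_1$, using Lemma~\ref{ggdtop} to produce a prime $Q^*$ (the paper's $Q$) of $V$ lying over $P$ inside $Q_1$. Your handling of the cases $P_1\subseteq P$ and $P_0\subseteq P\subseteq P_1$ via the localization $R_{H\setminus P}$ is essentially the paper's Cases~2 and~3.

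There is a genuine gap in your treatment of the case $P\subseteq P_0$. You propose to certify that $V/Q^*$ is a homogeneous overring of $R/P$ by a divisibility argument exploiting dividedness, but this conclusion is false in general. Take $R=k+Xk(t)[X]$ graded by $\deg X$, so that $P:=Xk(t)[X]$ is a divided homogeneous prime (indeed $H\setminus P=k\setminus\{0\}$ consists of units of $R$); let $V=k(t)[X]$, a gr-valuation overring of $R$, and $Q^*=XV$. Then $Q^*\cap R=P$, yet $V/Q^*\cong k(t)$ while $(R/P)_{H(R/P)}\cong k$, so $V/Q^*$ is \emph{not} a homogeneous overring of $R/P$; no denominator-clearing argument can produce one. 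The paper never makes this overring claim: in its Case~1 it simply asserts that the extension $R/P\subseteq V/Q$ satisfies $\gGD$ ``since $R/P$ is a $\gGD$ domain by assumption'', without passing through the false intermediate statement. If you want a self-contained repair, do not aim for the overring property; instead pass to the genuine gr-valuation overring $W:=(V/Q^*)\cap(R/P)_{H(R/P)}$ of $R/P$, apply the $\gGD$ hypothesis on $R/P$ to $W$, and then use that $\Spec_h(V/Q^*)$ is a chain to lift the resulting prime back to $V/Q^*$ below $Q_1/Q^*$.
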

\begin{proof} (2)$\Rightarrow$(1) It is trivial by taking $P=0$.

(1) $\Rightarrow$(2) By Theorem \cite[Theorem 3.12]{ss23}, it is enough to show that $R\subseteq V$ satisfies $\gGD$ for each gr-valuation domain $V$. Assume that $P_0\subseteq P_1$ are homogeneous prime ideals of $R$ and $Q_1$ is a homogeneous prime ideal of $V$ such that $Q_1\cap R=P_1$. Note that $P$ is comparable to $P_0$ and $P_1$, since $P$ is a divided homogeneous prime ideal. We next consider three cases:

\textbf{Case 1:} $P\subseteq P_0$. Since, by Lemma \ref{ggdtop}, $R\subseteq V$ satisfies graded going-down to $P$, there exists a homogeneous prime $Q$ of $V$ such that $Q\subseteq Q_1$ and $Q\cap R=P$. It is easy to see that $R/P$ is a graded subring of $V/Q$ and that the extension $R/P\subseteq V/Q$ satisfies $\gGD$ property, since $R/P$ is a $\gGD$ domain by assumption. Note that $Q_1/Q\cap R/P=P_1/P$, there exists a homogeneous prime ideal $Q_0$ of $V$ such that $Q\subseteq Q_0\subseteq Q_1$ and $Q_0/Q\cap R/P=P_0/P$. Thus $Q_0\cap R=P_0$.

\textbf{Case 2:} $P_0\subseteq P\subseteq P_1$. By Case 1, there exists a homogeneous prime ideal $Q$ of $V$ such that $Q\subseteq Q_1$ and $Q\cap R=P$. Note that in this case $R_{H\setminus P}$ is a graded subring $V_{H(V)\setminus Q}$ and the extension $R_{H\setminus P}\subseteq V_{H(V)\setminus Q}$ satisfies graded going-down, since $R_{H\setminus P}$ is a gGD domain by assumption. As $QV_{H(V)\setminus Q}\cap R_{H\setminus P}=PR_{H\setminus P}$, there exists a homogeneous prime ideal $Q_0$ of $V$ such that $Q_0\subseteq Q$ and $Q_0V_{H(V)\setminus Q}\cap R_{H\setminus P}=P_0R_{H\setminus P}$. Thus $Q_0\cap R=P_0$.

\textbf{Case 3:} $P_1\subseteq P$. It is the same as Case 2, using the facts that $R_{H\setminus P}$ is a graded subring $V_{H\setminus P}$ and the extension $R_{H\setminus P}\subseteq V_{H\setminus P}$ satisfies graded going-down, to produce suitable $Q_0$ satisfying $Q_0\subseteq Q_1$ and $Q_0\cap R=P_0$.
\end{proof}

As an application we determine when the graded CPI-extension $R(P)$ is a $\gGD$ domain (see \cite[Corollary 2.3]{d97}).

\begin{corollary}\label{rpr}
Assume that $R=\bigoplus_{\alpha\in\Gamma}R_{\alpha}$ is a graded domain and $P$ is a homogeneous prime ideal of $R$. The following are equivalent:
\begin{enumerate}
  \item $R/P$ and $R_{H\setminus P}$ are $\gGD$ domains.
  \item $R(P)=R+PR_{H\setminus P}$ is a $\gGD$ domain.
\end{enumerate}
\end{corollary}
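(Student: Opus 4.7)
The strategy is to realise the graded CPI-extension $R(P)$ as a pullback of type $\triangle$ in which the conductor $M:=PR_{H\setminus P}$ serves as a \emph{divided} homogeneous prime, and then invoke Proposition~\ref{lem2}. The main preparation is to identify the corresponding factor and localisation of $R(P)$ with the rings appearing in the statement.

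First I would unpack the pullback data: here $T = R_{H\setminus P}$ (for which $M$ is the unique maximal homogeneous ideal) and $D = R/P$. A direct computation shows that $k = T/M = R_{H\setminus P}/PR_{H\setminus P}$ coincides with the homogeneous quotient field $F$ of $R/P$, so the hypothesis $k=F$ needed for Proposition~\ref{pull}(9) is automatic. From this I would record three key facts about $R(P)$: \textbf{(a)} $R(P)/M = R/P$, by Proposition~\ref{pull}(6); \textbf{(b)} $R(P)_{H\setminus M} = T_{H(T)\setminus M} = R_{H\setminus P}$, by Proposition~\ref{pull}(8)--(9) together with the observation that every homogeneous element of $T$ outside $M$ is already a unit in $T$; \textbf{(c)} $M$ is a homogeneous divided prime ideal of $R(P)$, because any homogeneous $h \in R(P)\setminus M$ is a unit in $R_{H\setminus P}$, so for every $m \in M$ the element $h^{-1}m$ still lies in $M \subseteq R(P)$, giving $h \mid m$ in $R(P)$.

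With (a)--(c) in place the corollary follows quickly. For $(1)\Rightarrow(2)$, assuming $R/P$ and $R_{H\setminus P}$ are $\gGD$, the facts (a)--(c) show that $R(P)$ carries a divided homogeneous prime $M$ with $R(P)/M$ and $R(P)_{H\setminus M}$ both $\gGD$, so Proposition~\ref{lem2} yields that $R(P)$ is a $\gGD$ domain. For $(2)\Rightarrow(1)$, assuming $R(P)$ is $\gGD$, the ring $R/P$ is a factor of $R(P)$ by the homogeneous prime $M$ and hence $\gGD$ by \cite[Corollary~4.8]{ss23}, while $R_{H\setminus P}$ is a homogeneous ring of fractions of $R(P)$ and hence $\gGD$ by \cite[Proposition~4.1]{ss23}.

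The only genuine work is verifying (b) and (c); both reduce to the observation that $M$ is the unique maximal homogeneous ideal of $T$, which makes every homogeneous element of $R(P)$ outside $M$ literally invertible in $T$. Once this is isolated, the remainder is bookkeeping with the pullback machinery of Section~2 rather than any new ring-theoretic argument.
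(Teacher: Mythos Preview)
Your proposal is correct and follows essentially the same route as the paper: both identify $M=PR_{H\setminus P}$ as a divided homogeneous prime of $R(P)$, verify $R(P)/M\cong R/P$ and $R(P)_{H(R(P))\setminus M}=R_{H\setminus P}$, then invoke Proposition~\ref{lem2} for $(1)\Rightarrow(2)$ and \cite[Corollary~4.8, Proposition~4.1]{ss23} for $(2)\Rightarrow(1)$. The only cosmetic difference is that the paper cites \cite[Lemma~4.6]{ss23} for the dividedness of $M$ and calls the localisation identity ``an easy computation,'' whereas you supply both arguments directly.
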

\begin{proof} By \cite[Lemma 4.6]{ss23}, $Q:=PR_{H\setminus P}$ is a homogeneous divided prime ideal of $R(P)$ and, by Proposition \ref{pull}(6), $R(P)/Q=R/P$. Also, by an easy computation, we have $R(P)_{H(R(P))\setminus Q}=R_{H\setminus P}$. Therefore (1)$\Rightarrow$(2) holds by Proposition \ref{lem2} and (2)$\Rightarrow$(1) holds by \cite[Corollary 4.8 and Proposition 4.1]{ss23}.
\end{proof}

The next lemma isolates the case of our main result where $T$ has a unique maximal homogeneous ideal and $D=F$ (see \cite[Proposition B.2]{ad80}).

\begin{remark}\label{rem}
{\em Consider a pullback diagram of type $\bigtriangleup$. Assume that $T=\bigoplus_{\alpha\in\Gamma}T_{\alpha}$ is a graded domain, with a unique maximal homogeneous ideal and $D=F$. Then $$\Spec_h(R)=\Spec_h(T)$$ as sets. Indeed assume that $P$ is a homogeneous prime ideal of $R$, $t\in T$ and $a\in P$. Then $(ta)^2=at^2a\in MTP=MP\subseteq P$. Since $ta\in M\subseteq R$, and $P$ is a prime ideal of $R$, we obtain that $ta\in P$. To show that $P$ is a prime ideal of $T$, assume that $ab\in P$ for some homogeneous elements $a,b\in T$. If $a,b\in M$, then  one of $a$ or $b$ is in $P$ as $P$ is a prime ideal of $R$. For the remaining case, suppose without loss of generality $a\notin M$. Then $a^{-1}\in T$ and $b=a^{-1}(ab)\in TP\subseteq P$ as required. Conversely, assume that $P$ is a homogeneous prime ideal of $T$, $r\in R$ and $a\in P$. Then $ra\in TP\subseteq P$. Hence $P$ is an ideal of $R$, and as $R\subseteq T$, we have in fact $P\in\Spec_h(R)$.
}
\end{remark}

\begin{lemma}\label{lem1}
Consider a pullback diagram of type $\bigtriangleup$. Assume that $T=\bigoplus_{\alpha\in\Gamma}T_{\alpha}$ is a graded domain, with a unique maximal homogeneous ideal $M =\bigoplus_{\alpha \in \Gamma}M_{\alpha}$, and $D=F$. Then $R$ is a $\gGD$ domain if and only if $T$ is a $\gGD$ domain.
\end{lemma}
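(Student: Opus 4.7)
The plan is to leverage the gr-valuation overring test for $\gGD$ (\cite[Theorem 3.12]{ss23}) together with two structural consequences of the hypotheses. First, by Remark \ref{rem}, $\Spec_h(R) = \Spec_h(T)$ as sets. Second, every homogeneous prime of $R$ is contained in $M$: by Proposition \ref{pull}(5) each such prime is comparable to $M$, and since $D = F$ is a graded field (whose only homogeneous prime is $0$), Proposition \ref{pull}(6) implies that the only homogeneous prime of $R$ containing $M$ is $M$ itself. Thus every $P \in \Spec_h(R) = \Spec_h(T)$ sits inside $M \subseteq R$, so contractions from an overring to $R$ and to $T$ agree on such primes.

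The key technical step is to prove that \emph{every gr-valuation overring $V$ of $R$ already contains $T$}. Let $N$ be the unique maximal homogeneous ideal of $V$. Then $N \cap R \in \Spec_h(R)$, so $N \cap R \subseteq M$. If $N \cap R \subsetneq M$, I would choose a homogeneous $m \in M \setminus N$; then $m$ is a unit of $V$, and since $mT \subseteq R$ by the conductor property (Proposition \ref{pull}(1)), we get $T \subseteq m^{-1}R \subseteq V$. If instead $N \cap R = M$, pick any homogeneous $t \in T \setminus M$, which is a unit of $T$; assuming $t \notin V$, the gr-valuation axiom forces $t^{-1} \in V$, and $t^{-1}$ cannot be a unit of $V$, so $t^{-1} \in N \cap T$. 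But $N \cap T$ is a homogeneous prime of $T$ contracting to $M$ in $R$, whence Proposition \ref{pull}(7) gives $N \cap T = M$, so $t^{-1} \in M$, contradicting $t$ being a $T$-unit.

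With $T \subseteq V$ in hand, both implications follow by a uniform prime-translation argument. For the forward direction, any gr-valuation overring $W$ of $T$ is automatically a gr-valuation overring of $R$ (since $R \subseteq T \subseteq W \subseteq T_H = R_H$ by Proposition \ref{pull}(2)); given $P \subseteq P_1$ in $\Spec_h(T)$ and $Q_1 \in \Spec_h(W)$ with $Q_1 \cap T = P_1$, the spectral coincidence together with $P_1 \subseteq M \subseteq R$ give $Q_1 \cap R = P_1$, so $\gGD$ for $R \subseteq W$ supplies $Q \subseteq Q_1$ with $Q \cap R = P$, and then $Q \cap T = P$ because $Q \cap T \subseteq M \subseteq R$. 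For the backward direction, any gr-valuation overring $V$ of $R$ is (by the key step) a gr-valuation overring of $T$, so $\gGD$ of $T \subseteq V$ applies, and the analogous translation of primes converts this into $\gGD$ of $R \subseteq V$.

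The main obstacle is the case $N \cap R = M$ in the containment step: showing $T \subseteq V$ here relies delicately on the uniqueness of the prime of $T$ contracting to $M$ (Proposition \ref{pull}(7)), combined with the gr-valuation trichotomy on elements of $T_H = R_H$. Everything else is bookkeeping made uniform by the identification $\Spec_h(R) = \Spec_h(T)$.
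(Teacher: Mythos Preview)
Your forward direction ($R$ $\gGD$ $\Rightarrow$ $T$ $\gGD$) is correct and essentially matches the paper's argument. The gap is in your ``key technical step'' for the backward direction, specifically in the case $N\cap R=M$. You assert that $N\cap T$ is a homogeneous prime ideal of $T$; but $N$ is only an ideal of $V$, and since you have not yet established $T\subseteq V$, there is no reason for $N\cap T$ to be closed under multiplication by elements of $T$. Invoking Proposition~\ref{pull}(7) at that point is circular.

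In fact the claim ``every gr-valuation overring of $R$ contains $T$'' is false. Take $T=K(X)[Y]$ ($\mathbb{N}$-graded by $Y$-degree, $K$ a field), $M=YK(X)[Y]$, $k=K(X)$, $D=F=K$, so $R=K+YK(X)[Y]$. Let $W=K[X]_{(X)}$ and set $V=W+YK(X)[Y]$. One checks directly that $V$ is a gr-valuation overring of $R$ with maximal homogeneous ideal $N=XW+YK(X)[Y]$ and $N\cap R=M$, yet $K(X)\not\subseteq W$ so $T\not\subseteq V$. Concretely, with $t=1/X$ one has $t\notin V$ and $t^{-1}=X\in N\cap T$, but $X\notin R$, so your intended contradiction never occurs; and $N\cap T=XW+YK(X)[Y]$ is visibly not an ideal of $T$ (multiply $X$ by $1/X\in T$).

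The paper avoids this obstacle: for the implication $T$ $\gGD$ $\Rightarrow$ $R$ $\gGD$ it does \emph{not} try to force $T\subseteq V$. Instead, given a homogeneous overring $V$ of $R$ (localized so that $Q_1$ is its unique maximal homogeneous ideal), it passes to the composite $S:=TV$, observes that $P_1S=P_1V\subseteq Q_1$ so $P_1S\neq S$, takes a minimal prime $W_1$ of $P_1S$, and uses the $\gGD$ property of $T\subseteq S$ (valid because $S$ is a homogeneous overring of the $\gGD$ domain $T$) to descend to $W_0$ with $W_0\cap T=P_0$; then $Q_0:=W_0\cap V$ does the job. That composite-ring maneuver is precisely what your argument is missing.
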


\begin{proof} First of all note that $R$ has a unique maximal homogeneous ideal $M$, since $R/M=F$ and Proposition \ref{pull}(6).

Assume that $R$ is a $\gGD$ domain. To show that $T$ is a $\gGD$ domain, let $V$ be a gr-valuation overring  of $T$, $P_0\subseteq P_1$ be homogeneous prime ideals of $T$, and $Q_1$ be a homogeneous prime ideal of $V$ such that $Q_1\cap T=P_1$. Since $P_0\subseteq P_1$ are homogeneous prime ideals of $R$ by Remark \ref{rem}, and so $Q_1\cap R=P_1\cap R=P_1$. Thus there exists a homogeneous prime ideal $Q_0$ of $V$ such that $Q_0\subseteq Q_1$ and $Q_0\cap R=P_0$. Since $Q_0\cap T\subseteq Q_1\cap T=P_1\subseteq M\subseteq R$, we obtain $P_0=Q_0\cap R=(Q_0\cap T)\cap R=Q_0\cap T$. Thus $T$ is a $\gGD$ domain by \cite[Theorem 3.12]{ss23}.

Conversely, assume that $T$ is a $\gGD$ domain and let $V$ be a homogeneous overring of $R$, $P_0\subseteq P_1$ are homogeneous prime ideals of $R$ and $Q_1$ is a homogeneous prime ideal of $V$ such that $Q_1\cap R=P_1$. By localization to $H(V)\setminus Q_1$ we can assume that $V$ has a unique homogeneous maximal ideal $Q_1$. Also note that $P_0$ and $P_1$ are prime ideals of $T$ by Remark \ref{rem}. Now let $S:=TV$, which is a $\Gamma$-graded domain in the way that $S_{\gamma}=\Sigma_{\gamma=\alpha+\beta}T_{\alpha}V_{\beta}$ for each $\gamma\in\Gamma$. Then $P_1S=(P_1T)V=P_1V\subseteq Q_1$, and so $P_1S\neq S$. Let $W_1$ be a minimal prime ideal of $S$ containing $P_1S$. Since $T\subseteq S$ satisfies $\gGD$, we obtain that $W_1\cap T=P_1$. So there exists a homogeneous prime ideal $W_0$ of $S$ such that $W_0\subseteq W_1$ and $W_0\cap T=P_0$. Set $Q_0:=W_0\cap V$. Then $Q_0$ is a homogeneous prime ideal of $V$ such that $Q_0\cap R=P_0$ and $Q_0\subseteq Q_1$ since $V$ has a unique homogeneous maximal ideal $Q_1$. Thus again $R$ is a $\gGD$ domain by \cite[Theorem 3.12]{ss23}.
\end{proof}

The next result drops the assumption that $D=F$ in the above lemma (see \cite[Lemma 2.3]{d09}).

\begin{lemma}\label{loc}
Consider a pullback diagram of type $\bigtriangleup$. Assume that $T$ has a unique homogeneous maximal ideal $M$. Then $R$ is a $\gGD$ domain if and only if both $T$ and $D$ are $\gGD$ domains.
\end{lemma}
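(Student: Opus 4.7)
My plan is to split the type $\triangle$ pullback into two stacked pullbacks of type $\triangle$ via the intermediate ring $S := \varphi^{-1}(F)$, exactly as was done in the proof of Lemma \ref{h}. The lower square $(S,F,T,k)$ has $D$ replaced by $F$, so Lemma \ref{lem1} applies directly (the hypothesis that $T$ has unique maximal homogeneous ideal $M$ transfers verbatim) and yields that $S$ is $\gGD$ iff $T$ is $\gGD$. It then remains to handle the upper square $(R,D,S,F)$ and show that $R$ is $\gGD$ iff both $S$ and $D$ are $\gGD$.

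My approach to the upper square is to recognize $R$ as its own graded CPI-extension at $M$ and then invoke Corollary \ref{rpr}. First I would verify that $M$ is the unique maximal homogeneous ideal of $S$: by Proposition \ref{pull}(5) applied to the lower square, every homogeneous ideal of $S$ is comparable to $M$; since $S/M = F$ is the homogeneous quotient field of $D$ (hence has no proper nonzero homogeneous ideal), $M$ must be the unique maximal homogeneous ideal of $S$. In particular, every homogeneous element of $S$ outside $M$ is a unit in $S$, so $S_{H(S)\setminus M} = S$.

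With this in hand, Proposition \ref{pull}(9) applied to the upper square (where the role of $k$ is played by $F$) identifies $R_{H\setminus M} = S_{H(S)\setminus M} = S$, while Proposition \ref{pull}(6) gives $R/M = D$. Since $M$ is already an ideal of $T \supseteq S$, we have $MS = M$, so the graded CPI-extension is $R(M) = R + MR_{H\setminus M} = R + M = R$. Corollary \ref{rpr} now yields that $R = R(M)$ is $\gGD$ iff $D = R/M$ and $S = R_{H\setminus M}$ are both $\gGD$. Combined with the equivalence coming from the lower square via Lemma \ref{lem1}, this chains to the desired conclusion that $R$ is $\gGD$ iff both $T$ and $D$ are $\gGD$.

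The main technical point I expect to require care is the verification that $M$ is the unique maximal homogeneous ideal of $S$ and the corresponding identification $R_{H\setminus M} = S$, which is what lets me view $R$ as coinciding with its own CPI-extension $R(M)$; once that is in place, the rest is a clean application of the Section~3 machinery to each half of the pullback.
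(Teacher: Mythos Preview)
Your proposal is correct and follows essentially the same route as the paper: split through $S=\varphi^{-1}(F)$, use Lemma~\ref{lem1} on the lower square to get $S$ is $\gGD$ iff $T$ is, and on the upper square identify $R$ with its own CPI-extension $R(M)$ (via $R_{H\setminus M}=S$ and $R/M=D$) so that Corollary~\ref{rpr} gives $R$ is $\gGD$ iff $S$ and $D$ are. The paper presents this as a case split (first $F=k$, then reduce the general case to it by ``replacing $T$ with $S$''), whereas you handle both squares at once, but the argument is the same.
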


\begin{proof} Let $F$ denote the homogenous quotient field of $D$. Put $S:=\varphi^{-1}(F)$. Suppose first that $S=T$, i.e., that is $F=T/M=k$. Then $R/M=D$ and $R_{H\setminus M}=T_{H\setminus M}=T$ by Proposition \ref{pull}(9). Hence, $R=R + MR_{H\setminus M}=R(M)$. Now by Corollary \ref{rpr}, $R$ is a $\gGD$ domain if and only if $R_{H\setminus M}=T$ and $R/M=D$ are $\gGD$ domains.

In the remaining case, $S \subsetneq T$. As we have seen, it follows from Lemma \ref{lem1} that $T$ is a $\gGD$ domain if and only if $S$ is a $\gGD$ domain. Moreover, $S$ has a unique homogeneous maximal ideal $M$ by Proposition \ref{pull}(6). Thus, by replacing $T$ with $S$, it follows from that analysis that, $R$ is a $\gGD$ domain if and only if both $S$ and $D$ are $\gGD$ domains. This holds if and only if both $T$ and $D$ are $\gGD$, as asserted.
\end{proof}

We next present our main result of this paper. For the proof we use \cite[Proposition 4.1]{ss23} that a graded domain $R$ is a $\gGD$ domain if and only if
$R_{H\setminus P}$ is a $\gGD$ domain for all $P\in \Spec_h(R)$ if and only if $R_{H\setminus M}$ is a $\gGD$ domain for all $M\in \Max_h(R)$. The nongraded version of the result is \cite[Theorem 2.4]{d09}.

\begin{theorem}\label{m}
Consider a pullback diagram of type $\bigtriangleup$. Then $R$ is a $\gGD$ domain if and only if both $T$ and $D$ are $\gGD$ domains.
\end{theorem}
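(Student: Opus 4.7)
The plan is to reduce both directions to the local case handled by Lemma \ref{loc} by localizing at each maximal homogeneous ideal. The two essential external inputs are \cite[Proposition 4.1]{ss23}, which says that $\gGD$ is a local property (detected at localizations $R_{H\setminus M}$ for $M \in \Max_h(R)$) and is preserved by rings of fractions, and \cite[Remark 2.2(1)]{cs18a}, which says that any localization of our pullback at a homogeneous multiplicative set of $R$ is again a pullback of type $\triangle$.

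For sufficiency, suppose $T$ and $D$ are $\gGD$ and take $P \in \Max_h(R)$. If $M \nsubseteq P$, then Proposition \ref{pull}(3) identifies $R_{H\setminus P}$ with $T_{H\setminus Q}$ for the unique $Q \in \Spec_h(T)$ contracting to $P$; this is a localization of $T$ and is thus $\gGD$. If $M \subseteq P$, localize the entire pullback at $N := H \setminus P$. Proposition \ref{pull}(8) tells us the bottom-left corner collapses to $T_{H \setminus P} = T_{H(T) \setminus M}$, which has a unique maximal homogeneous ideal, so the localized diagram satisfies the hypothesis of Lemma \ref{loc}. Since $T_{H \setminus P}$ and $D_{\varphi(H \setminus P)}$ are localizations of the $\gGD$ domains $T$ and $D$ respectively, Lemma \ref{loc} yields that $R_{H \setminus P}$ is $\gGD$. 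With this for every $P \in \Max_h(R)$, \cite[Proposition 4.1]{ss23} gives that $R$ is $\gGD$.

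For necessity, suppose $R$ is $\gGD$. Proposition \ref{pull}(6) gives $D = R/M$, so $D$ is $\gGD$ by \cite[Corollary 4.8]{ss23}. For $T$, take $Q \in \Max_h(T)$. If $Q \neq M$, then Proposition \ref{pull}(4) gives $T_{H(T) \setminus Q} = R_{H \setminus Q \cap R}$, which is a localization of the $\gGD$ domain $R$, hence $\gGD$. If $Q = M$, localize the pullback at $H \setminus M$; as in the sufficiency argument, this localized diagram satisfies the hypothesis of Lemma \ref{loc}, so the hypothesis that $R_{H \setminus M}$ is $\gGD$ forces $T_{H \setminus M} = T_{H(T) \setminus M}$ to be $\gGD$ via Lemma \ref{loc}. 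One more application of \cite[Proposition 4.1]{ss23}, now to $T$, completes the proof.

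The main technical hurdle is verifying, each time we localize with $M \subseteq P$ (or $Q = M$), that the localized diagram is indeed a pullback of type $\triangle$ in which the bottom-left ring has a unique maximal homogeneous ideal, precisely the setup needed to invoke Lemma \ref{loc}. Proposition \ref{pull}(8) together with \cite[Remark 2.2(1)]{cs18a} provides this, after which the theorem reduces to an assembly of the local case across the maximal homogeneous spectra of $R$ and $T$.
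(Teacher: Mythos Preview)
Your proposal is correct and follows essentially the same approach as the paper: both reduce to the local case of Lemma \ref{loc} via \cite[Proposition 4.1]{ss23}, splitting according to whether the maximal homogeneous ideal under consideration contains $M$, and using Proposition \ref{pull}(3),(4),(8) together with \cite[Remark 2.2(1)]{cs18a} to identify the relevant localizations. The only cosmetic differences are that the paper packages both directions into a single if-and-only-if chain at the end (exploiting the bijection between $\Max_h(R)$ and $\Max_h(T)\cup\Max_h(D)$ implicit in the case analysis), whereas you handle sufficiency and necessity separately and, for the necessity of $D$ being $\gGD$, invoke \cite[Corollary 4.8]{ss23} directly on $D=R/M$ rather than extracting it from Lemma \ref{loc}.
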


\begin{proof} By \cite[Proposition 4.1]{ss23}, $\gGD$ is a local property. Assume that $P$ is a maximal homogeneous ideal of $R$. If $M\nsubseteq P$, then by Proposition \ref{pull}(3), there exists a unique homogeneous prime ideal $\fm$ of $T$ such that $P=\fm\cap R$ and $R_{H\setminus P}=T_{H\setminus \fm}=T_{H(T)\setminus \fm}$. Indeed, by Proposition \ref{pull}(2), we have $R_H=T_H=T_{H(T)}$ and $R_P=T_{\fm}$; so that $R_{H\setminus P}=R_H\cap R_P=R_H\cap T_{\fm}=T_{H(T)}\cap T_{\fm}=T_{H(T)\setminus \fm}$. Note that from the uniqueness of $\fm$ and maximality of $P$, we have $\fm\in\Max_h(T)$ and of course $\fm\neq M$.

On the other hand, if $M\subseteq P$, then $T_{H\setminus P}=T_{H(T)\setminus M}$ by Proposition \ref{pull}(8). Since $D=R/M$, we then have $N:=P/M\in\Max_h(D)$. Indeed, as $P$ varies over the maximal homogeneous ideals of $R$ that contain $M$, we
have that $N$ varies over the maximal homogeneous ideals of $D$. Note that $\varphi(H\setminus P)=\varphi(H)\setminus \varphi(P)=H(D)\setminus N$. Hence we have the following pullback diagram of type $\triangle$ by \cite[Remark 2.2(1)]{cs18a},
\begin{displaymath}
\xymatrix{ R_{H\setminus P} \ar[r] \ar[d] &
D_{H(D)\setminus N} \ar[d] \\
T_{H(T)\setminus M} \ar[r]^{\varphi} & k. }
\end{displaymath}
Therefore by Lemma \ref{loc}, $R_{H\setminus P}$ is a $\gGD$ domain, if and only if $T_{H(T)\setminus M}$ and $D_{H(D)\setminus N}$ are $\gGD$ domains.

Let $\fm$ be a homogeneous maximal ideal of $T$ such that $\fm\neq M$. Then by Proposition \ref{pull}(4), we have $R_{\fm\cap R}=T_{\fm}$. It follows that $R_{H\setminus\fm\cap R}=T_{H\setminus\fm}=T_{H(T)\setminus\fm}$.

Therefore from the above paragraphs we obtain that $R$ is a $\gGD$ domain if and only if $R_{H\setminus P}$ is a $\gGD$ domain for each $P\in \Max_h(R)$, if and only if $T_{H(T)\setminus \fm}$ and $D_{H(D)\setminus N}$ are $\gGD$ domains for each $\fm\in \Max_h(T)$ and $N\in \Max_h(D)$ if and only if both $T$ and $D$ are $\gGD$ domains.
\end{proof}

\section{Examples of graded going-down domains}

In this section we give examples of $\gGD$ domains using pullback of graded domains.

Recall that a graded domain $R$ is a \emph{graded-Pr\"{u}fer domain} if each finitely generated homogeneous ideal of $R$ is invertible. Consider a pullback of type $\triangle$. It is known from \cite[Corollary 3.8]{cs18a} that $R$ is graded-Pr\"{u}fer domain if and only if $T$ and $D$ are graded-Pr\"{u}fer domain, $M$ is a maximal $t$-ideal of $T$, and $k=F$.

Note that every graded-Pr\"{u}fer domain is a $\gGD$ domain \cite[Corollary 3.8]{ss23}. The following example generates a new class of $\gGD$ domains which are not graded-Pr\"{u}fer.

\begin{example}
{\em Let $A = \bigoplus_{\alpha \in \Gamma}A_{\alpha}$
 be a graded-Pr\"{u}fer domain, $P = \bigoplus_{\alpha \in \Gamma}P_{\alpha}$ be a maximal homogeneous ideal of $A$,
$T = A[X,X^{-1}]$ be a $(\Gamma \times \mathbb{Z})$-graded integral domain with deg$(aX^n) = (\alpha, n)$ for $0 \neq a \in A_{\alpha}$ and $n \in \mathbb{Z}$, $M = P[X,X^{-1}]$, and $D = A/P$. Then $D$ and $T$ are $\gGD$ domains, $k = (A/P)[X,X^{-1}](\neq F)$, and
$R = A + P[X, X^{-1}]$, i.e., $R = \bigoplus_{(\alpha, n) \in \Gamma \times \mathbb{Z}}R_{(\alpha, n)}$ with

\[
R_{(\alpha, n)} = \left \{
\begin{array}{ll}
A_{\alpha},    & \mbox{if $n=0$ }
\\P_{\alpha} X^n,   & \mbox{if $n\neq0$.}
\end{array}
\right.\]
Thus $R$ is a $\gGD$ domain by Theorem \ref{m}, while it is not a graded-Pr\"{u}fer domain by \cite[Corollary 3.8]{cs18a} }
\end{example}

In the following, we give a concrete example of a $\gGD$ domain which is not graded-Pr\"{u}fer.

\begin{example}
{\em Let $\mathbb{Q}$ be the field of rational numbers and $X$ and $Y$ are algebraically independent indeterminates over $\mathbb{Q}$. Consider the following pullback of $\mathbb{N}\cup\{0\}$-graded domains of type $\triangle$:
\begin{displaymath}
\xymatrix{ \mathbb{Q}+Y\mathbb{Q}(X)[Y] \ar[r] \ar[d] &
\mathbb{Q} \ar[d] \\
\mathbb{Q}(X)[Y] \ar[r] & \frac{\mathbb{Q}(X)[Y]}{(Y)}=\mathbb{Q}(X). }
\end{displaymath}
Here $R:=\mathbb{Q}+Y\mathbb{Q}(X)[Y]$ is a $\gGD$ domain by Theorem \ref{m}, but it is not a graded-Pr\"{u}fer domain by \cite[Corollary 3.8]{cs18a}.}
\end{example}

Recall that $R$ is called a \emph{gr-Noetherian domain} if the homogeneous ideals of $R$ satisfy the ascending chain condition.

Consider a pullback of type $\triangle$. It is known from \cite[Theorem 2]{cs18} that $R$ is gr-Noetherian if and only if $T$ is gr-Noetherian, $D=F$, and $k$ is a free $F$-module of finite rank.

It is shown that a gr-Noetherian domain $R$ is a $\gGD$ domain if and only if h-$\dim(R)\leq1$ \cite[Proposition 3.13]{ss23}.  The following example shows the gr-Noetherian assumption is crucial.

\begin{example}
{\em Let $\mathbb{Q}$ be the field of rational numbers and $X$ and $Y$ are algebraically independent indeterminates over $\mathbb{Q}$. Consider the following pullback of $\mathbb{N}\cup\{0\}$-graded domains of type $\triangle$:
\begin{displaymath}
\xymatrix{ \mathbb{Z}_{(2)}+Y\mathbb{Q}(X)[Y] \ar[r] \ar[d] &
\mathbb{Z}_{(2)} \ar[d] \\
\mathbb{Q}(X)[Y] \ar[r] & \frac{\mathbb{Q}(X)[Y]}{(Y)}=\mathbb{Q}(X). }
\end{displaymath}
Then $R:=\mathbb{Z}_{(2)}+Y\mathbb{Q}(X)[Y]$ is a $\gGD$ domain by Theorem \ref{m}, and it is not a gr-Noetherian domain by \cite[Theorem 2]{cs18}. Also h-$\dim(R)=2$ by Proposition \ref{h-dim}.}
\end{example}
\vspace{.2cm}

\noindent {\bf Acknowledgement.} The authors would like to thank the referee for his/her careful reading of the manuscript and several comments which greatly improved the paper. This paper is published as part of research supported by the Research Affairs Office of the University of Tabriz.


\begin{thebibliography}{10}

\bibitem{ad80} D. F. Anderson and D. E. Dobbs, {\em Pair of rings with the same prime ideals}, Can. J. Math. {\bf 32} (1980), 362--384.

\bibitem{bs77} M. B. Boisen and P. B. Sheldon, {\em CPI-extensions: overrings of integral domains with special prime spectrums}, Can. J. Math. {\bf 29} (1977), 722--737.

\bibitem{br76} J. Brewer and E. Rutter, {\em D+M constructions with general overrings}, Mich. Math. J. {\bf 23} (1976), 33--41.

\bibitem{co2018} G. W. Chang and  D. Y. Oh, {\em Discrete valuation overrings of a graded Noetherian domain}. J. Commut. Algebra, {\bf 10}, (2018) 45--61.

\bibitem{cs18} G. W. Chang and P. Sahandi, {\em Graded-Noetherian property in pullbacks of graded integral domains}, Ric. Mat. {\bf 67} (2018), 699--707.

\bibitem{cs18a} G. W. Chang and P. Sahandi, {UMT-domain property in pullbacks of graded integral domains}, Bull. Iran. Math. Soc. {\bf 44} (2018), 623--641.

\bibitem{d73} D. E. Dobbs, {\em On going-down for simple overrings}, Proc. Amer. Math. Soc. {\bf 39} (1973), 515--519.

\bibitem{d74} D. E. Dobbs, {\em On going-down for simple overrings II}, Comm. Algebra {\bf 1} (1974), 439--458.

\bibitem{d97} D. E. Dobbs, {\em On henselian pullbacks}, pp 317--326, in Factorization in integral domains, Lecture Notes Pure Appl. Math. {\bf 189} Dekker, New York, 1997.

\bibitem{d09} D. E. Dobbs, {\em When is a pullback a locally divided domain?}, Houston J. Math. {\bf 35} (2009), 341--351.

\bibitem{dp76} D. E. Dobbs and I. J. Papick, {\em On going-down for simple overrings, III}, Proc.
Amer. Math. Soc. {\bf 54} (1976), 35--38.

\bibitem{f80} M. Fontana, {\em Topologically defined classes of commutative rings}, Ann. Mat. Pura Appl. {\bf 123} (1980), 331--355.

\bibitem{gh00} S. Gabelli and E. Houston, {\em Ideal theory in
pullbacks}, in: Non-Noetherian commutative ring theory, 199--227, Math. Appl., 520, Kluwer Acad. Publ., Dordrecht, 2000.

\bibitem{Huck} J. Huckaba, {\em Commutative Rings with Zero Divisors}, Dekker, New York, 1988.

\bibitem{k74} I. Kaplansky, {\em Commutative Rings}, Revised ed., Univ. of Chicago Press, Chicago, 1974.

\bibitem{mc74} S. McAdam, {\em 1-going down}, J. London Math. Soc. {\bf 8} (1974), 674--680.

\bibitem{no68} D. G. Northcott, {\em Lessons on Rings, Modules, and Multiplicities},
Cambridge Univ. Press, Cambridge, 1968.

\bibitem{ss23} P. Sahandi, and N. Shirmohammadi, {\em On graded going-down domains}, J. Algebra Appl. to appear, https://doi.org/10.1142/S021949882550361X.


\end{thebibliography}
\end{document}